\numberwithin{equation}{section}
\definecolor{grey}{rgb}{.7,.7,.7}
\definecolor{refkey}{gray}{.45}
\definecolor{labelkey}{gray}{.45}
\newcommand{\xupref}[2]{\hspace{-0.3ex}\stackrel{\eqref{#1}}{#2}} 
\newtheorem{theorem}{Theorem}[section]
\newtheorem{proposition}[theorem]{Proposition}
\newtheorem{lemma}[theorem]{Lemma}
\newtheorem*{mainlemma}{Main Lemma}
\theoremstyle{remark}
\newtheorem{remark}[theorem]{Remark}
\theoremstyle{definition}
\newtheorem{definition}[theorem]{Definition}
\newtheorem{example}[theorem]{Example}
\newcommand{\e}{\varepsilon}
\newcommand{\N}{\mathbb N}
\newcommand{\R}{\mathbb R}
\newcommand{\s}{\mathbb S}
\newcommand{\dd}{\,\mathrm{d}}
\renewcommand{\setminus}{\backslash}
\newcommand{\defeq}{\coloneqq}
\newcommand{\per}{\mathcal{P}}
\newcommand{\nl}{\mathcal{V}}
\newcommand{\nli}{\mathcal{I}}
\newcommand{\en}{\mathcal{E}_\gamma}
\newcommand{\FZ}{\mathscr{C}}
\newcommand{\pol}{\mathscr{P}_n}
\newcommand{\Hone}{\mathcal{H}^1}
\newcommand{\asymm}[2]{#1\triangle#2}
\newcommand{\ba}{\begin{array}}
\newcommand{\ea}{\end{array}}
\newcommand{\bthm}{\begin{theorem}}
\newcommand{\ethm}{\end{theorem}}
\newcommand{\bprop}{\begin{proposition}}
\newcommand{\eprop}{\end{proposition}}
\newcommand{\blemma}{\begin{lemma}}
\newcommand{\elemma}{\end{lemma}}
\newcommand{\bexmpl}{\begin{example}}
\newcommand{\eexmpl}{\end{example}}
\newcommand{\beqn}{\begin{equation}}
\newcommand{\eeqn}{\end{equation}}
\newcommand{\beqns}{\begin{equation*}}
\newcommand{\eeqns}{\end{equation*}}
\newcommand{\Hn}{\mathcal{H}^{d-1}}
\renewcommand{\leq}{\leqslant}
\renewcommand{\geq}{\geqslant}
\definecolor{mygreen}{rgb}{0.1,0.75,0.2}
\newcounter{myenumi}
\DeclareMathOperator{\diam}{diam}
\title[Minimality of polytopes in anisotropic NLIP]{Minimality of polytopes in a nonlocal anisotropic isoperimetric problem}
\author{Marco Bonacini}
\address[Marco Bonacini]{Department of Mathematics, University of Trento, Italy}
\email{marco.bonacini@unitn.it}
\author{Riccardo Cristoferi}
\address[Riccardo Cristoferi]{Department of Mathematics - IMAPP, Radboud University, Nijmegen, The Netherlands}
\email{riccardo.cristoferi@ru.nl}
\author{Ihsan Topaloglu}
\address[Ihsan Topaloglu]{Department of Mathematics and Applied Mathematics, Virginia Commonwealth University, Richmond, VA, USA}
\email{iatopaloglu@vcu.edu}
\date{\today}                                        
\subjclass[2010]{49Q10, 49Q20, 49J10, 49K21}
\keywords{liquid drop model, crystalline surface tension, anisotropic, Wulff shape, polygon}       
\thanks{This is a post-peer-review, pre-copyedit version of an article published in Nonlinear Analysis. The final
authenticated version is available online at: \url{http://dx.doi.org/10.1016/j.na.2020.112223}.}                                    
\begin{document}

\begin{abstract}
We consider the minimization of an energy functional given by the sum of a crystalline perimeter and a nonlocal interaction of Riesz type, under volume constraint. We show that, in the small mass regime, if the Wulff shape of the anisotropic perimeter has certain symmetry properties, then it is the unique global minimizer of the total energy. In dimension two this applies to convex polygons which are reflection symmetric with respect to the bisectors of the angles. We further prove a rigidity result for the structure of (local) minimizers in two dimensions.
\end{abstract}

\maketitle
\tableofcontents


\section{Introduction}\label{sec:intro}

We consider the nonlocal isoperimetric problem
\beqn \label{eq:min}
\min \Bigl\{ \en(E) \,:\, |E|=1 \Bigr\},
\eeqn
among sets of finite perimeter $E\subset\R^d$ with given volume, where $|\cdot|$ denotes the Lebesgue measure in $\R^d$, and the energy functional $\en$ is defined as
\beqn \label{eq:energy}
\en(E) \defeq \int_{\partial^*E} \psi(\nu_E)\dd\Hn + \gamma\int_E\int_E\frac{1}{|x-y|^\alpha}\dd x\dd y
\eeqn
for $\gamma>0$, $\alpha\in(0,d)$. The first term in the energy functional is the (anisotropic) perimeter of $E$, whereas the second term is a Riesz-type nonlocal interaction energy. In this paper we are interested in surface energies determined by \emph{crystalline} surface tensions $\psi$, whose Wulff shapes (which are the corresponding isoperimetric regions) are given by convex polyhedra. We provide detailed definitions of the two terms in the energy, and their properties, in Section~\ref{sec:main}.

The minimization problem \eqref{eq:min} was recently introduced in \cite{ChNeuTo20} as an extension of the classical liquid drop model of Gamow (see \cite{Ga1930}) to the anisotropic setting. Gamow's model, initially developed to predict the mass defect curve and the shape of atomic nuclei, is described by the minimization problem \eqref{eq:min} in the isotropic setting, i.e., with $\psi$ given by the Euclidean norm. The two terms present in the energy functional $\en(E)$ are in direct competition. The surface energy is minimized by a bounded, convex set - the Wulff shape - whereas the repulsive term prefers to disperse the mass into vanishing components diverging infinitely apart. The parameter of the problem, that is $\gamma$, sets a length scale between these competing forces. As such, the liquid drop model is a paradigm for shape optimization via competing short- and long-range interactions on unbounded spaces and it  has recently generated considerable interest in the calculus of variations community (see e.g. \cite{AlBrChTo2017_3,BoCr14,ChPe2010,FFMMM,FrLi2015,Ju2014,KnMu2014,KnMuNo2016,LuOtto2014,MurZal,RW2014} as well as \cite{ChMuTo2017} for a review).

In the anisotropic liquid drop model \eqref{eq:min} the competition is not only between the attractive and repulsive forces, but also between the anisotropy in the surface energy and the isotropy of the Riesz-like interaction energy. As in the isotropic case, the problem admits a minimizer when $\gamma$ is sufficiently small and fails to have minimizers for large values of $\gamma$, see \cite[Theorem~3.1]{ChNeuTo20}. However, as \cite[Theorem 1.1]{ChNeuTo20} shows, when $\psi$ is smooth its Wulff shape $W_\psi$ is not a critical point of the energy $\en(E)$ for any $\gamma>0$, whereas in the isotropic case the ball is the unique global minimizer for $\gamma>0$ sufficiently small. On the other hand, for crystalline surface tensions whose Wulff shape is given by a square the authors prove that the corresponding Wulff shape is the unique (modulo translations) minimizer for sufficiently small $\gamma$.

This demonstrates a fundamentally interesting situation: the regularity and ellipticity of the surface tension $\psi$ determines whether the isoperimetric set $W_\psi$ could also be a minimizer of the perturbed problem \eqref{eq:min}. The qualitative properties of minimizers when $\psi$ is smooth (hence, not equal to $W_\psi$) have recently been studied in \cite{MiTo} in the asymptotic regime $\gamma\to 0$.

In this paper we prove that, for a wide class of crystalline surface tensions, the corresponding isoperimetric set $W_\psi$ \emph{remains} as the minimizer of the nonlocal isoperimetric problem for small values of $\gamma>0$. Specifically, we prove the following:

	\begin{itemize}\setlength\itemsep{1em}
		\item In any dimensions, if the Wulff shape of $\psi$ enjoys particular symmetry properties (in particular, if it is a regular polytope), then it is, up to translations, the unique solution to \eqref{eq:min} for sufficiently small $\gamma$ (see Theorem~\ref{thm:main1} and Theorem~\ref{thm:main3}); in two dimensions, these polygons are exactly those which are reflection symmetric with respect to the bisectors of all angles.
		\item In two dimensions and for every $\gamma>0$, the boundary of any local minimizer of $\en$ can be decomposed into two parts, one of which is a level set of the potential induced by the interaction energy and the other one is aligned with the sides of the corresponding Wulff shape (see Theorem~\ref{thm:main2}). This rigidity result is an adaptation of \cite[Theorem~14]{FigalliMaggiARMA}, where a similar property was firstly observed and proved for the crystalline perimeter perturbed by a bulk potential energy induced by an external force field.
	\end{itemize}
	\smallskip

Our first result relies on a structure theorem obtained by Figalli and Maggi \cite{FigalliMaggiARMA} in two dimensions, recently extended by Figalli and Zhang \cite{FigZha} to higher dimensions, which states that for $\gamma$ sufficiently small, minimizers of $\en$ are polygons with sides aligned with those of the corresponding Wulff shape, a result which essentially reduces the problem to a finite dimensional one. Then the main point of the proof is to show that, restricting to this class of variations, the symmetry assumptions on the polygon yield a quadratic upper bound on the nonlocal energy difference.

It is worth to notice that, for small $\gamma$, minimizers of \eqref{eq:min} are always obtained by perturbations of the Wulff shape of the surface energy, whose sides are translated parallel to themselves; in our result we exhibit an explicit class of Wulff shapes which remain global minimizers for $\gamma>0$. However, we cannot prove that polygons in this class are exactly those with this global minimality property. In other words, it is an open problem to classify the crystalline anisotropies whose Wulff shapes remain the global minimizers of \eqref{eq:min} for $\gamma>0$ sufficiently small. In turn, this would require to characterize the critical points of the nonlocal energy with respect to the restricted class of variations (see Remark~\ref{rmk:critical}).

Our second theorem shows that this rigid structure of minimizers is not just peculiar to the small $\gamma$ regime, but it characterizes (local) minimizers also for large values of $\gamma$. The proof follows the lines of an analogous result in \cite[Theorem~14]{FigalliMaggiARMA} for the minimization of the sum of the crystalline perimeter and an external potential energy. We point out that the rigidity of minimizers in geometric variational problems involving crystalline surface tensions seems to be a ubiquitous property: a similar phenomenon was observed in \cite{Bo2013} for a thin film model, where it was shown that the flat configuration (that is, a configuration with a flat facet parallel to a facet of the Wulff shape) was always a local minimizer.

\subsection*{Structure of the paper}
The paper is organized as follows. In Section~\ref{sec:main} we introduce the necessary definitions and notations, and state the main results of the paper. In Section~\ref{sec:quadratic} we prove the Main Lemma providing the quadratic upper bound. Section~\ref{sec:proofs} is devoted to the proofs of the main results. Finally, in Section~\ref{sec:3d} we outline possible extensions of our results to higher dimensions.


\section{Definitions and main results}\label{sec:main}

\subsection{The energy functional}
As noted in the introduction the energy $\en$ is the sum of an anisotropic surface energy and a nonlocal interaction energy of Riesz type. We start by defining these two terms separately and detail their properties.

\medskip
Given a convex, positively one-homogeneous function $\psi:\R^d\to[0,\infty)$, strictly positive on $\s^{d-1}$, we define the anisotropic surface energy of a set of finite perimeter $E\subset\R^d$ as
\beqn \label{eq:perimeter}
\per_\psi(E) \defeq  \int_{\partial^*E} \psi(\nu_E)\dd\Hn,
\eeqn
$\partial^*E$ denoting the reduced boundary of $E$ and $\nu_E$ the measure-theoretic exterior unit normal to $E$ (see for instance \cite{Maggi2012}). We recall that volume-constrained minimizers of the surface energy $\per_\psi$ are obtained by translations and dilations of the \emph{Wulff shape} of $\psi$, that is, the open, bounded, convex set
\beqn \label{eq:wulff}
W_\psi \defeq \bigcap_{\nu\in\s^{d-1}} \Bigl\{ x\in\R^d \,:\, x\cdot\nu < \psi(\nu) \Bigr\}.
\eeqn
Conversely, any open, bounded, convex set $K\subset\R^d$ which contains the origin is the Wulff shape of a surface tension $\psi$ with the properties above: one has that $K=W_\psi$ for
\beqn \label{eq:psi}
\psi(\nu)=\sup\bigl\{ x\cdot\nu : x\in K \bigr\}.
\eeqn
We also introduce the dual $\psi_*:\R^d\to[0,\infty)$ of $\psi$ defined by
\begin{equation} \label{eq:dual}
\psi_*(\xi) \defeq \sup\bigl\{ \xi\cdot x \,:\, \psi(x)<1 \bigr\}
\end{equation}
and we remark that the Wulff shape of $\psi$ coincides with the unit ball of $\psi_*$, that is,
\begin{equation} \label{eq:dualW}
W_\psi=\bigl\{\xi\in\R^d \,:\, \psi_*(\xi)<1 \bigr\}.
\end{equation}
We finally recall the \emph{quantitative Wulff inequality}, which states that for every set of finite perimeter $E\subset\R^d$ with $|E|=|W_\psi|$ one has the sharp stability estimate
\beqn \label{eq:quantisop}
\per_\psi (E) \geq \per_\psi(W_\psi) + \bar{c} \min_{x_0\in\R^d} |\asymm{E}{(x_0+W_\psi)}|^2,
\eeqn
where $\asymm{E}{F}\defeq (E\setminus F)\cup(F\setminus E)$ denotes the symmetric difference of sets, and $\bar{c}>0$ is a dimensional constant (depending also on the volume of $W_\psi$ since the inequality is not displayed in a scaling invariant form). The sharp inequality \eqref{eq:quantisop} was proved by Figalli, Maggi and Pratelli in \cite{FigalliMaggiPratelliINVENTIONES} and further extended in a stronger form by Neumayer in \cite{Neum16} (see also \cite{CicaleseLeonardi,fuscomaggipratelli} for the quantitative isoperimetric inequality in the isotropic case).

Here we will consider only the class of \emph{crystalline surface tensions}, i.e.\ when there exists a finite set $\{\xi_1,\ldots,\xi_N\}\subset\R^d\setminus\{0\}$, $N\in\N$, such that
\beqn \label{eq:psicryst}
\psi(\nu) = \max_{1\leq i\leq N} \nu\cdot\xi_i \qquad\text{for all }\nu\in\R^d.
\eeqn
Notice that the corresponding Wulff shape is a convex polyhedron.

\medskip
The second term in the energy functional $\en$ is the nonlocal repulsive interaction 
\beqn \label{eq:nonlocal}
\nl(E) \defeq \int_E\int_E \frac{1}{|x-y|^\alpha}\dd x \dd y \,,
\eeqn
where the parameter $\alpha$ ranges in the interval $(0,d)$. It will be convenient to define the interaction between two measurable sets $E,F\subset\R^d$ as
\beqn\ \label{eq:nonlocali}
\nli(E,F) \defeq \int_E\int_F \frac{1}{|x-y|^\alpha}\dd x \dd y \,,
\eeqn
so that $\nl(E)=\nli(E,E)$.
We also introduce, for a Borel set $E\subset\R^d$, the associated \emph{Riesz potential} defined by
\begin{equation} \label{eq:potential}
v_E(x) \defeq \int_E \frac{1}{|x-y|^\alpha}\dd y \qquad\qquad\text{for }x\in\R^d.
\end{equation}
Notice in particular that $\nl(E)=\int_E v_E(x)\dd x$.

\medskip
In this paper we will be mainly interested in the minimization problem \eqref{eq:min} for $\gamma\ll1$. This corresponds to the small mass regime, when minimizing with respect to a mass constraint $|E|=m\in(0,\infty)$: indeed for $E$ with $|E|=m$, setting $\widetilde{E}=m^{-1/d}E$, by a scaling argument one finds
\begin{equation*}
\en(E) = m^{\frac{d-1}{d}}\Bigl[ \per_\psi(\widetilde{E}) + \gamma m^{\frac{d+1-\alpha}{d}}\nl(\widetilde{E})\Bigr].
\end{equation*}
This implies that minimizing $\en$ under the volume constraint $|E|=m$ is equivalent to solving \eqref{eq:min} with $\tilde{\gamma}=\gamma m^{\frac{d+1-\alpha}{d}}$.

\subsection{A class of symmetric polygons, and their variations}
In the following we fix the space dimension $d=2$. Most of the arguments can be generalized to higher space dimensions, but for clarity of the exposition we restrict to the planar case and we postpone the discussion of the possible extension to $d>2$ to Section~\ref{sec:3d}.

Given an open, convex polygon $P\subset\R^2$ with $n$ sides, we will denote in the following by $L_1,\ldots,L_n$ the sides of $P$, and by $\ell_i$ the length of $L_i$. By translation invariance we will always assume without loss of generality that $P$ contains the origin, so that we can consider the corresponding crystalline surface density $\psi$ (according to \eqref{eq:psi}), with $P=W_\psi$. Notice that $\psi$ can be represented as in \eqref{eq:psicryst} for suitable vectors $\xi_i$. We now introduce the class of polygons for which our main result holds.

\begin{definition} \label{def:polygons}
Let $\pol$, $n\geq3$, be the class of open, convex polygons $P\subset\R^2$ with $n$ sides $L_1,\ldots,L_n$ and unit area $|P|=1$, which are reflection symmetric with respect to the bisectors of all angles.
\end{definition}

\begin{remark} \label{rmk:polygons}
For every $n\geq3$, $\pol$ contains the regular polygon with $n$ sides. Each polygon in the class $\pol$ is equilateral, that is $\ell_i=\ell_j$ for every $i,j\in\{1,\ldots,n\}$. Moreover, the internal angles of any polygon in $\pol$ can only take two alternating values $\alpha,\beta\in(0,\pi)$ (possibly equal). In particular, it follows that if $n$ is odd, then $\pol$ contains only the regular polygon with $n$ sides. If $n$ is even, then $\pol$ contains a one-parameter family of polygons $P_{\alpha}$, which can be parametrized by one of the interior angles $\alpha\in(\frac{(n-4)\pi}{n},\pi)$; the other angle is determined by the constraint $\frac{n}{2}(\alpha+\beta)=\pi(n-2)$ (the requirement $\alpha>\frac{(n-4)\pi}{n}$ follows by the condition $\beta<\pi$). The polygon $P_\alpha$ can be constructed as follows: one considers a triangle with two angles equal to $\frac{\alpha}{2}$ and $\frac{\beta}{2}$ respectively, and places $n$ copies of such triangle next to each other so that the equal angles are adjacent.
\end{remark}

\begin{remark} \label{rmk:isometry}
Notice that each polygon in the class $\pol$ has the property that, for every pair of sides $L_i$, $L_j$, $i,j\in\{1,\ldots,n\}$, there exists an isometry $S_{ij}:\R^2\to\R^2$ which maps $L_i$ onto $L_j$ and leaves the polygon invariant, i.e.\ $S_{ij}(L_i)=L_j$ and $S_{ij}(P)=P$; such isometry can be obtained as a composition of reflections with respect to the bisectors of the angles.
The existence of the isometries $S_{ij}$ characterizes the polygons in the class $\pol$ and is the property on which our main result (also in higher dimension) strongly relies.
\end{remark}

\begin{figure}
\definecolor{qqttcc}{rgb}{0,0.2,0.8}
\begin{tikzpicture}[scale=0.15,line cap=round,line join=round,>=triangle 45,x=1cm,y=1cm]
\clip(-60,-1.5) rectangle (5,20);
\fill[line width=2pt,color=qqttcc,fill=qqttcc,fill opacity=0.16] (-50,15.019893331027108) -- (-54.20581638041034,7.509946665513554) -- (-50,0) -- (-45.79418361958969,7.509946665513554) -- cycle;
\fill[line width=2pt,color=qqttcc,fill=qqttcc,fill opacity=0.16] (-0.4319962961640007,0.7276888756272565) -- (-7.2125942244159065,0.15514799233725685) -- (-13.989600315240022,0.7687437652899137) -- (-14.562141198530021,7.549341693541819) -- (-13.948545425577365,14.326347784365932) -- (-7.1679474973254615,14.898888667655932) -- (-0.3909414065013497,14.285292894703275) -- (0.18159947678864974,7.504694966451374) -- cycle;
\fill[line width=2pt,color=qqttcc,fill=qqttcc,fill opacity=0.16] (-30,0) -- (-37.35241332883696,2.6158234616359755) -- (-36.05306327931321,10.3107700884364) -- (-30.111487045302624,15.370235079514527) -- (-24.097142810489963,10.397491614818177) -- (-22.68630571566359,2.7222031621040754) -- cycle;
\draw [line width=1pt,color=qqttcc] (-50,15.019893331027108)-- (-54.20581638041034,7.509946665513554);
\draw [line width=1pt,color=qqttcc] (-54.20581638041034,7.509946665513554)-- (-50,0);
\draw [line width=1pt,color=qqttcc] (-50,0)-- (-45.79418361958969,7.509946665513554);
\draw [line width=1pt,color=qqttcc] (-45.79418361958969,7.509946665513554)-- (-50,15.019893331027108);
\draw [line width=1pt,color=qqttcc] (-0.4319962961640007,0.7276888756272565)-- (-7.2125942244159065,0.15514799233725685);
\draw [line width=1pt,color=qqttcc] (-7.2125942244159065,0.15514799233725685)-- (-13.989600315240022,0.7687437652899137);
\draw [line width=1pt,color=qqttcc] (-13.989600315240022,0.7687437652899137)-- (-14.562141198530021,7.549341693541819);
\draw [line width=1pt,color=qqttcc] (-14.562141198530021,7.549341693541819)-- (-13.948545425577365,14.326347784365932);
\draw [line width=1pt,color=qqttcc] (-13.948545425577365,14.326347784365932)-- (-7.1679474973254615,14.898888667655932);
\draw [line width=1pt,color=qqttcc] (-7.1679474973254615,14.898888667655932)-- (-0.3909414065013497,14.285292894703275);
\draw [line width=1pt,color=qqttcc] (-0.3909414065013497,14.285292894703275)-- (0.18159947678864974,7.504694966451374);
\draw [line width=1pt,color=qqttcc] (0.18159947678864974,7.504694966451374)-- (-0.4319962961640007,0.7276888756272565);
\draw [line width=1pt,color=qqttcc] (-30,0)-- (-37.35241332883696,2.6158234616359755);
\draw [line width=1pt,color=qqttcc] (-37.35241332883696,2.6158234616359755)-- (-36.05306327931321,10.3107700884364);
\draw [line width=1pt,color=qqttcc] (-36.05306327931321,10.3107700884364)-- (-30.111487045302624,15.370235079514527);
\draw [line width=1pt,color=qqttcc] (-30.111487045302624,15.370235079514527)-- (-24.097142810489963,10.397491614818177);
\draw [line width=1pt,color=qqttcc] (-24.097142810489963,10.397491614818177)-- (-22.68630571566359,2.7222031621040754);
\draw [line width=1pt,color=qqttcc] (-22.68630571566359,2.7222031621040754)-- (-30,0);
\end{tikzpicture}
\caption{Examples of (non regular) polygons in the class $\pol$.}
\end{figure}
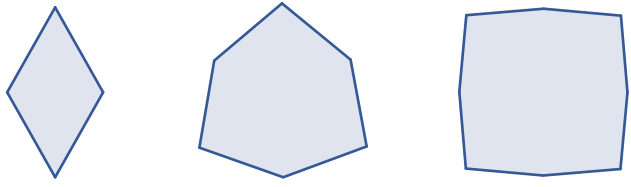

A key tool for our analysis is a result by Figalli and Maggi \cite[Theorem~7]{FigalliMaggiARMA} (recently extended by Figalli and Zhang \cite{FigZha} to any space dimension), which states that for a crystalline surface tension $\psi$, every $(\omega,R)$-minimizer of the anisotropic perimeter $\per_\psi$ is a convex polygon with sides parallel to those of the Wulff shape of $\psi$, provided $\omega$ is sufficiently small and $R\geq d+1$. Given $\omega,R>0$, a (volume constrained) $(\omega,R)$-minimizer of $\per_\psi$ is a set of finite perimeter $E$ satisfying the inequality
\beqn \label{eq:quasimin}
\per_\psi(E) \leq \per_\psi(F) + \omega |\asymm{E}{F}|
\eeqn
for all sets of finite perimeter $F\subset\R^2$ such that $|F|=|E|$ and $F\subset I_R(E)$, where $I_R(E)$ is the $R$-neighbourhood of $E$ with respect to $\psi_*$, i.e.
\begin{equation} \label{eq:distance}
I_R(E) \defeq \bigl\{ x\in\R^2\,:\, \mathrm{dist}_{W_\psi}(x,E)<R \bigr\}, \qquad\text{where}\quad\mathrm{dist}_{W_\psi}(x,E) = \inf_{y\in E}\psi_*(x-y).
\end{equation}
We also say that $E$ is an $\omega$-minimizer if it is an $(\omega,R)$-minimizer with $R=\infty$.

Since it turns out that minimizers of \eqref{eq:min} are $\omega$-minimizers of $\per_\psi$ with a constant $\omega$ proportional to $\gamma$ (see the proof of Theorem~\ref{thm:main1}), in view of the Figalli-Maggi result for $\gamma\ll1$ we are allowed to restrict to a finite-dimensional class of competitors, i.e. the class of all open, convex polygons which are close to $W_\psi$ and whose sides are parallel to those of $W_\psi$. We now introduce some notation to deal with the polygons in this class.

If $\psi$ is crystalline ($W_\psi$ is a convex polygon), then the dual $\psi_*$ (see \eqref{eq:dual}) is of the form
\begin{equation} \label{eq:dualcryst}
\psi_*(\xi) = \sup_{1\leq i\leq n} \xi\cdot\sigma_i\,,
\end{equation}
for some $n\in\N$ and vectors $\sigma_i\in\R^2$. We assume here that the set $\{\sigma_i\}_{i=1}^n$ is minimal, that is, denoting by $V_i$ the convex cone
\begin{equation*}
V_i \defeq \bigl\{ \xi\in\R^2 \,:\, \psi_*(\xi)=\xi\cdot\sigma_i \bigr\},
\end{equation*}
one has that $|W_\psi\cap V_i|>0$ for all $i$. The vector $\sigma_i$ is parallel to the exterior normal $\nu_i$ of the side $\partial W_\psi\cap V_i$ of $W_\psi$.
Following \cite{FigZha}, we can define the class of competitors by changing the length of the vectors $\sigma_i$: geometrically, this reflects in a translation of the sides of $W_\psi$ which keeps the orientation of the normals.

\begin{definition} \label{def:fzclass}
Given a polygon $P\in\pol$, let $\psi$ the corresponding surface tension (such that $W_\psi=P$) and $\psi_*$ its dual, given by \eqref{eq:dualcryst}. For $\mathbf{d}=(d_1,\ldots,d_n)\in\R^n$ with $|d_i|<\frac{1}{|\sigma_i|}$ let
\begin{equation} \label{eq:fzpsi}
\psi_*^{\mathbf{d}}(\xi) \defeq \sup_{1\leq i\leq n} \xi\cdot\frac{\sigma_i}{1+d_i|\sigma_i|},
\qquad
P(\mathbf{d}) \defeq \bigl\{\xi\in\R^2 \,:\, \psi_*^{\mathbf{d}}(\xi)<1 \bigr\}.
\end{equation}
Then for $\e>0$ we define the class of competitors
\begin{equation} \label{eq:fz}
\FZ(P,\e) \defeq \Bigl\{ P(\mathbf{d}) \,:\, \mathbf{d}\in\R^n, \, |\mathbf{d}|_\infty<\e, \, |P(\mathbf{d})|=|P| \Bigr\} \,,
\end{equation}
where $|\mathbf{d}|_\infty\defeq\sup_{1\leq i\leq n}|d_i|$.
\end{definition}

Notice that $P(\mathbf{0})=P$. Notice also that $P(\mathbf{d})$ is obtained by translating the side $L_i=\partial P\cap V_i$ by the vector $d_i\nu_i$. The geometric meaning of the parameters $\mathbf{d}=(d_1,\ldots,d_n)$ is explained in Figure~\ref{fig:d}.

\begin{figure}
\definecolor{qqwuqq}{rgb}{0,0.4,0}
\definecolor{yqqqqq}{rgb}{0.9,0.2,0}
\definecolor{qqttcc}{rgb}{0,0.2,0.8}
\begin{tikzpicture}[scale=0.25,line cap=round,line join=round,>=triangle 45,x=1cm,y=1cm]
\clip(-60,-1) rectangle (-21,20);
\fill[line width=1.2pt,color=qqttcc,fill=qqttcc,fill opacity=0.1] (-50,0) -- (-57.30962693713489,2.5979273782193033) -- (-56.02047131036041,10.247630012475428) -- (-50.11578673506794,15.278988943111623) -- (-44.135527735792635,10.33769610376521) -- (-42.73058537395022,2.708409794909705) -- cycle;
\fill[line width=2pt,color=yqqqqq,fill=yqqqqq,fill opacity=0.2] (-31.131523932027932,0.4025707908983199) -- (-36.65185428412461,2.3665802943302943) -- (-34.90689834116111,12.700472113662478) -- (-30.389946189260723,16.546818668655494) -- (-24.302272414960626,11.513445320796713) -- (-22.828024277964225,3.49318575729532) -- cycle;
\fill[line width=0pt,color=qqwuqq,fill=qqwuqq,pattern=north east lines,pattern color=qqwuqq] (-36.65185428412461,2.3665802943302943) -- (-37.35241332883696,2.6158234616359755) -- (-36.05306327931321,10.3107700884364) -- (-35.18570112868179,11.049360028692657) -- cycle;
\fill[line width=0pt,color=qqwuqq,fill=qqwuqq,pattern=north east lines,pattern color=qqwuqq] (-31.131523932027932,0.4025707908983199) -- (-30,0) -- (-22.68630571566359,2.7222031621040754) -- (-22.828024277964225,3.49318575729532) -- cycle;
\draw [line width=1pt,color=qqttcc] (-50,0)-- (-57.30962693713489,2.5979273782193033);
\draw [line width=1pt,color=qqttcc] (-57.30962693713489,2.5979273782193033)-- (-56.02047131036041,10.247630012475428);
\draw [line width=1pt,color=qqttcc] (-56.02047131036041,10.247630012475428)-- (-50.11578673506794,15.278988943111623);
\draw [line width=1pt,color=qqttcc] (-50.11578673506794,15.278988943111623)-- (-44.135527735792635,10.33769610376521);
\draw [line width=1pt,color=qqttcc] (-44.135527735792635,10.33769610376521)-- (-42.73058537395022,2.708409794909705);
\draw [line width=1pt,color=qqttcc] (-42.73058537395022,2.708409794909705)-- (-50,0);
\draw [line width=1.6pt,color=qqttcc] (-30,0)-- (-37.35241332883696,2.6158234616359755);
\draw [line width=1.6pt,color=qqttcc] (-37.35241332883696,2.6158234616359755)-- (-36.05306327931321,10.3107700884364);
\draw [line width=1.6pt,color=qqttcc] (-36.05306327931321,10.3107700884364)-- (-30.111487045302624,15.370235079514527);
\draw [line width=1.6pt,color=qqttcc] (-30.111487045302624,15.370235079514527)-- (-24.097142810489963,10.397491614818177);
\draw [line width=1.6pt,color=qqttcc] (-24.097142810489963,10.397491614818177)-- (-22.68630571566359,2.7222031621040754);
\draw [line width=1.6pt,color=qqttcc] (-22.68630571566359,2.7222031621040754)-- (-30,0);
\draw [line width=1pt,color=yqqqqq] (-31.131523932027932,0.4025707908983199)-- (-36.65185428412461,2.3665802943302943);
\draw [line width=1pt,color=yqqqqq] (-36.65185428412461,2.3665802943302943)-- (-34.90689834116111,12.700472113662478);
\draw [line width=1pt,color=yqqqqq] (-34.90689834116111,12.700472113662478)-- (-30.389946189260723,16.546818668655494);
\draw [line width=1pt,color=yqqqqq] (-30.389946189260723,16.546818668655494)-- (-24.302272414960626,11.513445320796713);
\draw [line width=1pt,color=yqqqqq] (-24.302272414960626,11.513445320796713)-- (-22.828024277964225,3.49318575729532);
\draw [line width=1pt,color=yqqqqq] (-22.828024277964225,3.49318575729532)-- (-31.131523932027932,0.4025707908983199);
\draw [line width=1pt,color=qqttcc] (-30,0)-- (-37.35241332883696,2.6158234616359755);
\draw [line width=1pt,color=qqttcc] (-37.35241332883696,2.6158234616359755)-- (-36.05306327931321,10.3107700884364);
\draw [line width=1pt,color=qqttcc] (-36.05306327931321,10.3107700884364)-- (-30.111487045302624,15.370235079514527);
\draw [line width=1pt,color=qqttcc] (-30.111487045302624,15.370235079514527)-- (-24.097142810489963,10.397491614818177);
\draw [line width=1pt,color=qqttcc] (-24.097142810489963,10.397491614818177)-- (-22.68630571566359,2.7222031621040754);
\draw [line width=1pt,color=qqttcc] (-22.68630571566359,2.7222031621040754)-- (-30,0);
\draw [line width=0.4pt,dash pattern=on 1pt off 1pt] (-33.1497558180976,14.196741913061075)-- (-32.45193642268202,13.377258599559706);
\draw [line width=0.4pt,dash pattern=on 1pt off 1pt] (-26.502913479735923,13.332965961078893)-- (-26.967658799326706,12.770874159364029);
\node at (-50,5) {$P$};
\node at (-30,5) {$P(\mathbf{d})$};
\node at (-33,11) {$L_i$};
\node at (-27,11) {$L_{i+1}$};
\node at (-33.5,15) {{\footnotesize $d_i$}};
\node at (-25,14) {{\footnotesize $d_{i+1}$}};
\end{tikzpicture}
\caption{A polygon $P\in\pol$ (left) and a variation $P(\mathbf{d})$ in the class $\FZ(P,\e)$ (right).}\label{fig:d}
\end{figure}

\subsection{Main results}
In our first main result we show that each polygon in the class $\pol$ is a global minimizer of the energy $\en$ (with respect to its own surface tension), provided that $\gamma$ is sufficiently small.

\begin{theorem}[Minimality of polygons in $\pol$] \label{thm:main1}
Let $P\in\pol$ and let $\psi$ be a surface energy density whose Wulff shape is $P$. Then there exists $\bar\gamma>0$, depending on $P$ and $\alpha$, such that for all $\gamma<\bar{\gamma}$ the polygon $P$ is the unique (up to translations) solution to \eqref{eq:min}.
\end{theorem}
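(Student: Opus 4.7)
The plan is to reduce the problem to a finite-dimensional one using the regularity theorem of Figalli--Maggi and Figalli--Zhang, and then combine the quantitative Wulff inequality with the quadratic upper bound on the nonlocal energy guaranteed by the Main Lemma.

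\textbf{Step 1 (Existence and reduction to $(\omega,R)$-minimizers).} For $\gamma$ small, existence of minimizers of \eqref{eq:min} is given by \cite{ChNeuTo20}, together with uniform bounds on their diameter. Any minimizer $E$ of $\en$ under the volume constraint $|E|=1$ satisfies, for every competitor $F$ with $|F|=|E|$ and $F\subset I_R(E)$,
\[
\per_\psi(E)\le \per_\psi(F)+\gamma\bigl(\nl(F)-\nl(E)\bigr).
\]
Using that $v_E$ and $v_F$ are uniformly bounded on bounded neighborhoods of $E$ (as $\alpha<d=2$), one gets $|\nl(F)-\nl(E)|\le C|E\triangle F|$, so $E$ is an $(\omega,R)$-minimizer of $\per_\psi$ with $\omega=C\gamma$ and $R$ large. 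Choosing $R\ge d+1$ and $\gamma$ small enough that $\omega$ is below the threshold of \cite[Thm.~7]{FigalliMaggiARMA}--\cite{FigZha}, we conclude that every minimizer is a convex polygon whose sides are parallel to those of $P=W_\psi$, and close to $P$. Up to a translation, it is therefore of the form $P(\mathbf{d})\in\FZ(P,\e)$ for some $\mathbf{d}$ with $|\mathbf{d}|_\infty<\e$ and $|P(\mathbf{d})|=|P|$, where $\e$ can be made as small as we wish by taking $\gamma$ small.

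\textbf{Step 2 (Lower bound on the perimeter part).} The quantitative Wulff inequality \eqref{eq:quantisop}, applied to $P(\mathbf{d})$, yields
\[
\per_\psi(P(\mathbf{d}))-\per_\psi(P)\ge \bar c\,\min_{x_0\in\R^2}\bigl|P(\mathbf{d})\triangle(x_0+P)\bigr|^2.
\]
Let $x_0^\ast$ be an optimal translation; by replacing $P(\mathbf{d})$ with $P(\mathbf{d})-x_0^\ast$ (using translation invariance of $\en$) we may assume $x_0^\ast=0$ and denote $A\defeq |P(\mathbf{d})\triangle P|$.

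\textbf{Step 3 (Quadratic upper bound on the nonlocal part).} This is the content of the Main Lemma, and relies crucially on the symmetry of $P\in\pol$. The key point is the following: the volume constraint $|P(\mathbf{d})|=|P|$ is, to leading order in $\mathbf{d}$, the linear constraint $\sum_i d_i\ell_i=0$. Because of the isometries $S_{ij}$ of Remark~\ref{rmk:isometry}, the Riesz potential $v_P$ satisfies $\int_{L_i} v_P\dd\Hone=\bar v\,\ell_i$ for the same constant $\bar v$, independent of $i$. Hence the first variation of $\nl$ at $P$ along admissible (volume-preserving) directions in $\FZ(P,\e)$ vanishes:
\[
\frac{d}{dt}\bigg|_{t=0}\nl\bigl(P(t\mathbf{d})\bigr)=2\sum_i d_i\int_{L_i} v_P\dd\Hone=2\bar v\sum_i d_i\ell_i=0.
\]
Expanding to second order (and estimating the remainder by standard computations based on the boundedness and Lipschitz character of $v_P$ on a neighborhood of $P$, together with $|P(\mathbf{d})\triangle P|\sim|\mathbf{d}|$), one obtains the bound
\[
\nl(P(\mathbf{d}))-\nl(P)\le C\,|P(\mathbf{d})\triangle P|^2=CA^2,
\]
with $C$ depending only on $P$ and $\alpha$.

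\textbf{Step 4 (Conclusion).} Combining the previous two steps,
\[
\en(P(\mathbf{d}))-\en(P)\ge \bar c\,A^2-\gamma C\,A^2=(\bar c-\gamma C)A^2.
\]
Choosing $\bar\gamma\defeq \bar c/C$, for every $\gamma<\bar\gamma$ the right-hand side is strictly positive unless $A=0$, i.e.\ unless $P(\mathbf{d})=P$ (up to translation). Since by Step 1 every minimizer of \eqref{eq:min} is of this form (after translation), this proves that $P$ is the unique (modulo translations) solution to \eqref{eq:min}.

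The main obstacle is Step 3: while the vanishing of the first-order term is a direct consequence of the symmetry, controlling the remainder by $|P(\mathbf{d})\triangle P|^2$ rather than by $|\mathbf{d}|_\infty^2$ and making the bound independent of the geometry of the side lengths requires a careful analysis, which is the content of the Main Lemma in Section~\ref{sec:quadratic}.
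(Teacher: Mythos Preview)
Your proposal is correct and follows essentially the same route as the paper: existence plus $\omega$-minimality of the anisotropic perimeter, the Figalli--Maggi structure theorem to reduce to competitors in $\FZ(P,\e)$, then the quantitative Wulff inequality against the quadratic bound of the Main Lemma to force $A=0$ for $\gamma<\bar c/c_0$. One small caveat: in your heuristic for Step~3 you invoke the ``Lipschitz character of $v_P$'', but for $\alpha\in[1,2)$ the potential $v_P$ is only H\"older, not Lipschitz, so a naive second-order Taylor argument does not directly yield the remainder estimate; the paper's proof of the Main Lemma avoids this by a direct geometric comparison (moving one side at a time and exploiting the isometries $S_{ij}$), which is indeed where the real work lies, as you acknowledge at the end.
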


The proof of Theorem~\ref{thm:main1}, given in Section~\ref{sec:proofs}, follows by the combination of three main ingredients: (a) the stability of the Wulff inequality \eqref{eq:quantisop}; (b) the fact that any solution to \eqref{eq:min} is an $\omega$-minimizer of the anisotropic perimeter and in turn, if $\gamma$ is sufficiently small, it is a polygon with sides parallel to those of $P$ (that is, it belongs to the class $\FZ(P,\e)$); (c) the following quadratic upper bound for variations within the class $\FZ(P,\e)$, which is one of the main new contributions of this paper and will be proved in Section~\ref{sec:quadratic}.

\begin{mainlemma}[Quadratic bound]
Let $P\in\pol$. There exists $\e_0>0$ and $c_0>0$ (depending on the polygon $P$ and on $\alpha$) such that for every $\widetilde{P}\in\FZ(P,\e_0)$ one has the quadratic estimate
\beqn \label{eq:quadratic}
\big| \nl(P) - \nl(\widetilde{P}) \big| \leq c_0 |\asymm{P}{\widetilde{P}}|^2 \,.
\eeqn
\end{mainlemma}

It is clear that the polygons in $\pol$ cease to be global minimizers of $\en$ for $\gamma$ large enough; at least in the case of the square, it is known that the Wulff shape loses its stability for large values of $\gamma$ and hence it is not even a local minimizer (see \cite{ChNeuTo20}).

\begin{remark}[Criticality conditions]\label{rmk:critical}
Let $P$ be a convex polygon (not necessarily in the class $\pol$) with $n$ sides $\{L_i\}_{i=1}^n$ and corresponding side lengths $\{\ell_i\}_{i=1}^n$. By considering the first variation of the nonlocal energy $\nl(\cdot)$ with respect to perturbations in the class $\FZ(P,\e)$, we say that $P$ is \emph{critical} for $\nl(\cdot)$ with respect to this class of variations if
\begin{equation} \label{eq:critical}
\frac{1}{\ell_i}\int_{L_i} v_{P}(x)\dd\Hone(x) = \frac{1}{\ell_j}\int_{L_j}v_P(x)\dd\Hone(x) \qquad\text{for every }i,j=1,\ldots,n,
\end{equation}
where $v_P$ is the potential associated to $P$ according to \eqref{eq:potential}. A derivation of these equations will be given in Section~\ref{sec:proofs}.

Notice that $P$ is always critical for the anisotropic perimeter $\per_\psi$ (where $\psi$ is such that $W_\psi=P$), which is differentiable with respect to this family of perturbations as they do not modify the orientation of the normals to the boundary. Hence the equations \eqref{eq:critical} are necessary conditions for minimality of $P$ for $\en$ in the class $\FZ(P,\e)$ (and for global minimality when $\gamma$ is small).
While it is obvious that each polygon $P\in\pol$ is critical for $\nl(\cdot)$ with respect to this class of perturbations, it is an open question whether there exist polygons $P\notin\pol$ for which the conditions \eqref{eq:critical} hold.
\end{remark}

In the next theorem we observe that, in the planar case, the rigid structure of minimizers of $\en$ is not just peculiar to the small mass regime ($\gamma\ll1$), but it characterizes (local) minimizers also for large $\gamma$. This property was first observed in \cite[Theorem~14]{FigalliMaggiARMA} for a similar minimization problem, where the crystalline perimeter was perturbed by a bulk potential energy induced by an external force field. Their argument applies also to the energy $\en$ with minor modifications, see Section~\ref{sec:proofs} for the proof.

\begin{theorem}[Rigidity of minimizers] \label{thm:main2}
Let $E\subset\R^2$ be a \emph{local minimizer} of $\en$, in the sense that there exists $\delta_0>0$ such that $\en(E)\leq\en(F)$ for every set of finite perimeter $F\subset\R^2$ such that $|F|=|E|$ and $\min_{y\in\R^2}|\asymm{(y+E)}{F}|<\delta_0$.

Then there exists a constant $v_0\in\R$ such that $\partial E = \Gamma_1\cup\Gamma_2$, where
\begin{equation*}
\Gamma_1=\bigl\{ x\in\R^2\,:\, v_E(x)=v_0 \bigr\}, \qquad \nu_E(x)\in\{\nu_i\}_{i=1}^n \quad\text{for $\Hone$-a.e. }x\in\Gamma_2,
\end{equation*}
where $v_E$ is the Riesz potential associated to $E$ (according to \eqref{eq:potential}), and $\{\nu_i\}_{i=1}^n$ is the set of exterior normals to the boundary of the Wulff-shape $W_\psi$.
\end{theorem}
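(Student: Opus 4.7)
The plan is to follow line-by-line the argument of Figalli--Maggi \cite[Theorem~14]{FigalliMaggiARMA}, where the analogous statement is proved when the crystalline perimeter is perturbed by a bulk energy $\int_E g\dd x$ with $g$ continuous. In our setting the role of the external potential $g$ is played by the self-induced Riesz potential $2\gamma v_E$: indeed, for any compactly supported smooth vector field $X$ with associated flow $\Phi_t=\id+tX+o(t)$, a symmetry argument gives
\begin{equation*}
\frac{\dd}{\dd t}\nl\bigl(\Phi_t(E)\bigr)\bigg|_{t=0}=2\int_{\partial^*E}v_E\,(X\cdot\nu_E)\dd\Hone,
\end{equation*}
so that $v_E$ enters the first variation of $\en$ exactly as an external potential would. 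Since $\alpha\in(0,2)$ and $E$ is bounded (boundedness follows from standard truncation arguments, using that $\nl(E)<+\infty$ prevents $E$ from splitting into diverging components), the potential $v_E$ is bounded and H\"older continuous on $\R^2$, which is the regularity required by the Figalli--Maggi argument.

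Next I would derive the Euler--Lagrange equation. Local minimality of $E$ under volume-preserving perturbations $\Phi_t$, together with the volume constraint imposed via a Lagrange multiplier $\lambda\in\R$, yields
\begin{equation*}
\delta\per_\psi(E)[X]=\int_{\partial^*E}\bigl(\lambda-2\gamma\,v_E\bigr)(X\cdot\nu_E)\dd\Hone \qquad\text{for every admissible }X,
\end{equation*}
where $\delta\per_\psi(E)[X]$ denotes the first variation of the crystalline perimeter in the appropriate weak sense. As a preliminary step I would observe that $E$ is a local $\omega$-minimizer of $\per_\psi$ with $\omega=C\gamma\|v_E\|_{L^\infty}$: indeed, competitors $F$ close to $E$ in $L^1$ satisfy $|\nl(F)-\nl(E)|\leq 2\|v_E\|_{L^\infty}|\asymm{E}{F}|+o(|\asymm{E}{F}|)$, which by local minimality transfers to a quasiminimality bound for $\per_\psi$ and provides enough regularity of $\partial^*E$ for the variational identity to be meaningful.

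The crucial geometric ingredient is the piecewise linear structure of $\psi$. Since $\psi(\nu)=\max_{i}\nu\cdot\xi_i$ is differentiable on $\R^2\setminus\{0\}$ away from the finitely many rays generated by the outer normals $\{\nu_i\}_{i=1}^n$ of $\partial W_\psi$, and $\nabla\psi$ is piecewise constant equal to a vertex of $W_\psi$ on each open cone between consecutive such rays, the Cahn--Hoffman vector $T_\psi(\nu_E)=\nabla\psi(\nu_E)$ is locally constant along any portion of $\partial^*E$ on which $\nu_E$ avoids $\{\nu_i\}_{i=1}^n$. On such portions its tangential divergence vanishes, so $\delta\per_\psi(E)[X]=0$ for all $X$ supported there, and the Euler--Lagrange equation collapses to $v_E\equiv\lambda/(2\gamma)=:v_0$. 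Setting
\begin{equation*}
\Gamma_2\defeq\bigl\{x\in\partial E\,:\,\nu_E(x)\in\{\nu_i\}_{i=1}^n\bigr\},\qquad \Gamma_1\defeq\partial E\setminus\Gamma_2,
\end{equation*}
yields the claimed decomposition, the value $v_0$ being a single scalar across all connected components of $\Gamma_1$ because so is $\lambda$. The main technical obstacle is to justify the Euler--Lagrange identity rigorously despite the limited a priori regularity of $\partial^*E$ for large $\gamma$, where the polygonality result \cite[Theorem~7]{FigalliMaggiARMA} does not apply and curved components of $\partial^*E$ are allowed; this is handled in \cite{FigalliMaggiARMA} by a careful measure-theoretic localization of the weak first variation of the crystalline perimeter, which transfers verbatim to our setting once the continuity of $v_E$ is in hand.
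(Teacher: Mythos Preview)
Your approach and the paper's differ more than your opening sentence suggests. The paper does \emph{not} pass through a first-variation / Euler--Lagrange identity with Cahn--Hoffman vector fields. After establishing global $\omega$-minimality of $E$ (with $\omega=\max\{2c_{\alpha,2}\gamma|E|^{1-\alpha/2},\ \per_\psi(E)/\delta_0\}$, the second term handling competitors with $|\asymm{E}{F}|\ge\delta_0$), it invokes \cite[Theorem~3]{FigalliMaggiARMA} for basic boundary regularity and then imports the explicit competitor construction of \cite[Theorem~14]{FigalliMaggiARMA}: at any $\bar x\in\partial^*E$ with $\nu_E(\bar x)\notin\{\nu_i\}$ one can build localized sets $R_\pm^\sigma(\bar x)$ of volume $\sigma$, with $R_+^\sigma\subset\R^2\setminus E$, $R_-^\sigma\subset E$, such that $\per_\psi(E\cup R_+^\sigma)\le\per_\psi(E)$ and $\per_\psi(E\setminus R_-^\sigma)\le\per_\psi(E)$. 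Taking two such points $x_1,x_2$ and the volume-preserving competitor $F^\sigma=(E\cup R_+^\sigma(x_1))\setminus R_-^\sigma(x_2)$, local minimality forces $\nl(E)\le\nl(F^\sigma)$; expanding, dividing by $\sigma$, and using continuity of $v_E$ gives $v_E(x_2)\le v_E(x_1)$, and exchanging roles finishes. No Lagrange multiplier, no distributional crystalline curvature, no boundedness of $E$ is ever invoked.

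Your route is heuristically correct, but the step you yourself flag as the ``main technical obstacle''---rigorously justifying $\delta\per_\psi(E)[X]=0$ on portions of $\partial^*E$ where $\nu_E\notin\{\nu_i\}$ and extracting a single scalar $\lambda$---is \emph{not} what \cite[Theorem~14]{FigalliMaggiARMA} does, at least as reproduced in this paper; Figalli and Maggi bypass the weak first variation entirely via the competitors $R_\pm^\sigma$. So the sentence ``transfers verbatim to our setting'' is a genuine gap: you would need an independent justification of the Euler--Lagrange identity for the crystalline perimeter at this level of regularity. The competitor approach buys precisely this---it compares $v_E$ at any two points of $\Gamma_1$ directly, automatically yielding a single constant $v_0$ across all components without ever differentiating $\per_\psi$.
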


Obviously, the theorem applies to global minimizers of $\en$, whenever they exist. Notice that the existence of solutions to the minimum problem \eqref{eq:min} was established in \cite[Theorem~3.1]{ChNeuTo20} for small values of $\gamma$. However, it is well-known that, at least for large values of $\gamma$ (corresponding to the large mass regime, see \cite[Theorem~3.1]{ChNeuTo20}), global minimizers do not exist due to the possibility of splitting the mass into different connected components and moving them far apart from each other: this reduces the cross interactions between the different components, and a ``minimizing configuration'' is reached only when the components are at infinite distance from each other. Following \cite{KnMuNo2016}, this situation can be captured by introducing a notion of \emph{generalized minimizer}: a collection of sets of finite perimeter $(E_1,\ldots,E_N)$, $N\in\N$, such that 
\begin{equation} \label{eq:genmin}
\sum_{i=1}^N |E_i|=1, \qquad \sum_{i=1}^N\en(E_i) = \inf\bigl\{ \en(E) \,:\, |E|=1 \bigr\}.
\end{equation}
The existence of a generalized minimizer can be proved by arguing as in \cite[Theorem~4.5]{KnMuNo2016}. Since by \eqref{eq:genmin} each component $E_i$ is a solution of the minimum problem corresponding to its mass, i.e.
$$
\en(E_i)=\min\bigl\{ \en(E) \,:\, |E|=|E_i| \bigr\},
$$
the result in Theorem~\ref{thm:main2} also applies to each component of a generalized minimizer.


\section{Proof of the quadratic bound}\label{sec:quadratic}

We first gather some well-known properties of the nonlocal energy \eqref{eq:nonlocal}. 
As a consequence of the classical Riesz rearrangement inequality (see for instance \cite{LiLo}), one can show that for every Borel set $E\subset\R^d$ with finite Lebesgue measure and every point $x\in\R^d$ it holds
\begin{equation} \label{eq:potential2}
v_E(x) \leq \int_{B_r}\frac{1}{|y|^\alpha}\dd y \,, \qquad\text{where } r=\biggl(\frac{|E|}{|B_1|}\biggr)^\frac{1}{d},
\end{equation}
see \cite[Lemma~2.3]{FuPr} for a proof. The following lemma is an immediate consequence of the bound \eqref{eq:potential2}.

\begin{lemma} \label{lem:potential1}
There exists a constant $c_{\alpha,d}$, depending only on $\alpha$ and on the space dimension, such that for any two Borel sets $E,F\subset\R^d$ with finite Lebesgue measure one has
\begin{equation} \label{eq:nlbound1}
\| v_E \|_\infty \leq c_{\alpha,d} |E|^{1-\frac{\alpha}{d}},
\end{equation}
\begin{equation} \label{eq:nlbound2}
\nli(E,F) \leq c_{\alpha,d} |E|^{1-\frac{\alpha}{d}} |F| \,,
\end{equation}
\beqn \label{eq:lipschitz}
\big| \nl(E)-\nl(F)\big| \leq c_{\alpha,d} \Bigl( |E|^{1-\frac{\alpha}{d}} + |F|^{1-\frac{\alpha}{d}} \Bigr) |\asymm{E}{F}| \,.
\eeqn
\end{lemma}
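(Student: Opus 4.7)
The plan is straightforward: the lemma is stated as an immediate consequence of \eqref{eq:potential2}, and indeed each of the three bounds reduces to a short computation once the pointwise estimate on $v_E$ is in hand.

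For \eqref{eq:nlbound1} I would simply evaluate the right-hand side of \eqref{eq:potential2} in polar coordinates. Since $\alpha<d$ the integral $\int_{B_r}|y|^{-\alpha}\dd y$ is finite and equals a dimensional constant times $r^{d-\alpha}$; substituting $r=(|E|/|B_1|)^{1/d}$ produces the claimed scaling $|E|^{1-\alpha/d}$ and identifies $c_{\alpha,d}=d|B_1|^{\alpha/d}/(d-\alpha)$. Estimate \eqref{eq:nlbound2} is then immediate from \eqref{eq:nlbound1} via Fubini: write $\nli(E,F)=\int_F v_E(x)\dd x\leq\|v_E\|_\infty|F|$ and apply \eqref{eq:nlbound1} to $\|v_E\|_\infty$.

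The bound \eqref{eq:lipschitz} is the only step that is not purely mechanical. My plan is to split the difference of the integrands asymmetrically as
$$
\chi_E(x)\chi_E(y)-\chi_F(x)\chi_F(y)=\chi_E(x)\bigl[\chi_E(y)-\chi_F(y)\bigr]+\chi_F(y)\bigl[\chi_E(x)-\chi_F(x)\bigr],
$$
so that, since $|\chi_E-\chi_F|=\chi_{\asymm{E}{F}}$, the triangle inequality followed by integration against $|x-y|^{-\alpha}$ yields
$$
\bigl|\nl(E)-\nl(F)\bigr|\leq\int_{\asymm{E}{F}}v_E(y)\dd y+\int_{\asymm{E}{F}}v_F(x)\dd x\leq\bigl(\|v_E\|_\infty+\|v_F\|_\infty\bigr)\,|\asymm{E}{F}|,
$$
which combined with \eqref{eq:nlbound1} applied to $E$ and $F$ gives the claim.

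There is no substantive obstacle: the whole argument is elementary bookkeeping once \eqref{eq:potential2} is granted. The only mild point worth a moment of thought is the choice of splitting in the last step, where the asymmetric decomposition above is tailored so that the characteristic-function factor localizes cleanly onto $\asymm{E}{F}$, producing two integrals that are directly controlled by $\|v_E\|_\infty$ and $\|v_F\|_\infty$ respectively; a symmetric decomposition via $E\cap F$, $E\setminus F$ and $F\setminus E$ would also work but introduces unwanted $|\asymm{E}{F}|^{1-\alpha/d}$ factors which then have to be reabsorbed.
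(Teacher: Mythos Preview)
Your proposal is correct and matches the paper's proof essentially line for line: the same polar-coordinate evaluation for \eqref{eq:nlbound1}, the same integration over $F$ for \eqref{eq:nlbound2}, and the identical asymmetric decomposition $\chi_E(x)\chi_E(y)-\chi_F(x)\chi_F(y)=\chi_E(x)[\chi_E(y)-\chi_F(y)]+\chi_F(y)[\chi_E(x)-\chi_F(x)]$ for \eqref{eq:lipschitz}. The only cosmetic difference is that the paper splits $\asymm{E}{F}$ into $E\setminus F$ and $F\setminus E$ before taking the absolute value, while you pass directly to $\chi_{\asymm{E}{F}}$ via the triangle inequality; the resulting bound is the same.
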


\begin{proof}
By setting $r=(|E|/|B_1|)^{1/d}$ we have thanks to \eqref{eq:potential2}
\begin{align*}
v_E(x) \leq \int_{B_r}\frac{1}{|y|^\alpha}\dd y = d\,|B_1|\int_0^r \rho^{d-1-\alpha}\dd\rho = \frac{d\,|B_1|}{d-\alpha} r^{d-\alpha} = \frac{d\,|B_1|^{\frac{\alpha}{d}}}{d-\alpha}|E|^{1-\frac{\alpha}{d}}
\end{align*}
for all $x\in\R^d$, which proves \eqref{eq:nlbound1}. Recalling that $\nli(E,F)=\int_F v_E(x)\dd x$, we obtain \eqref{eq:nlbound2} by integration over $F$.
Finally, we use once more \eqref{eq:nlbound1} to obtain
\begin{align*}
\big| \nl(E)-\nl(F)\big|
& = \bigg| \int_{\R^d}\int_{\R^d}\biggl( \frac{\chi_E(x)\bigl(\chi_E(y)-\chi_F(y)\bigr)}{|x-y|^\alpha} + \frac{\chi_F(y)\bigl(\chi_E(x)-\chi_F(x)\bigr)}{|x-y|^\alpha} \biggr)\dd x \dd y  \bigg| \\
& = \bigg| \int_{E\setminus F} \bigl( v_E(x)+v_F(x)\bigr)\dd x - \int_{F\setminus E} \bigl( v_E(x)+v_F(x)\bigr)\dd x\bigg| \\
& \leq c_{\alpha,d} \Bigl( |E|^{1-\frac{\alpha}{d}} + |F|^{1-\frac{\alpha}{d}} \Bigr) |\asymm{E}{F}| \,,
\end{align*}
which is \eqref{eq:lipschitz}.
\end{proof}

Let $P\in\pol$ be a fixed polygon and let $\psi_*$ the dual of the corresponding anisotropy $\psi$, given by the formula \eqref{eq:dualcryst}. As already remarked, each vector $\sigma_i$ is parallel to the normal $\nu_i$ of the side $L_i=\partial P\cap V_i$ of $P$, for $i=1,\ldots,n$. We number the sides of $P$ in clockwise order.

With the notation introduced in Section~\ref{sec:main}, we consider any possible variation $P(\mathbf{d})$ in the class $\FZ(P,\e)$, where $\mathbf{d}=(d_1,\ldots,d_n)$. Notice that the area constraint $|P(\mathbf{d})|=|P|$ reduces by one the number of free variables: we consider the last variable $d_n$ as a function of the first $(n-1)$ variables,
\begin{equation} \label{eq:volumeadj}
d_n = f(d_1,\ldots,d_{n-1}),
\end{equation}
chosen in order to restore the area constraint, which is possible if $\e$ is small enough.
It is convenient to introduce an auxiliary function $V:\{(d_1,\ldots,d_{n-1})\in\R^{n-1}:|d_i|<\e\}\to\R$,
\begin{equation} \label{eq:V}
V(d_1,\ldots,d_{n-1}) \defeq \nl \bigl( P(d_1,\ldots,d_{n-1},f(d_1,\ldots,d_{n-1}) ) \bigr).
\end{equation}

The proof of the Main Lemma will be achieved by an iteration argument, which allows us to prove the estimate \eqref{eq:quadratic} by moving just one side at a time, keeping the others fixed. 

\begin{lemma} \label{lem:induction0}
There exist $\e_1>0$ and $c_1>0$ such that for every $|d_1|<\e_1$ one has
\begin{equation} \label{eq:induction0}
\big| V(d_1,0,\ldots,0) - V(0,\ldots,0) \big| \leq c_1 |d_1|^2 .
\end{equation}
\end{lemma}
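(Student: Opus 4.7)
The plan is to establish that the map $d_1 \mapsto V(d_1,0,\ldots,0)$ is of class $C^2$ on some interval $(-\e_1,\e_1)$ with uniformly bounded second derivative, and that $\partial_{d_1} V(0,\ldots,0) = 0$ as a consequence of the symmetry of $P$; the estimate \eqref{eq:induction0} then follows from Taylor's theorem with $c_1 = \tfrac12 \|V''\|_{L^\infty(-\e_1,\e_1)}$.

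For the smoothness step, I would construct for $|d_1|$ small a family of bi-Lipschitz homeomorphisms
$$
\Phi_{d_1}: P \longrightarrow P\bigl(d_1,0,\ldots,0,f(d_1,0,\ldots,0)\bigr)
$$
depending smoothly on $d_1$, obtained e.g.\ by triangulating $P$ from an interior reference point and prescribing the (smoothly $d_1$-dependent) images of the vertices of $P$. Setting $J_{d_1} := |\det D\Phi_{d_1}|$, the change of variables gives
$$
V(d_1) = \int_P\int_P \frac{J_{d_1}(x)\,J_{d_1}(y)}{|\Phi_{d_1}(x) - \Phi_{d_1}(y)|^\alpha}\,\dd x\,\dd y,
$$
reducing the question to the smoothness of an integral on the fixed domain $P\times P$. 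For $|d_1|$ small, the bi-Lipschitz constants of $\Phi_{d_1}$ are uniform, so $|\Phi_{d_1}(x)-\Phi_{d_1}(y)| \geq c\,|x-y|$; moreover $\partial_{d_1}^k \Phi_{d_1}$ is Lipschitz in $x$ (being piecewise affine), yielding $|\partial_{d_1}^k(\Phi_{d_1}(x)-\Phi_{d_1}(y))| \leq C\,|x-y|$. When differentiating the integrand up to second order in $d_1$, each negative power of $|\Phi_{d_1}(x)-\Phi_{d_1}(y)|$ generated by the chain rule on the denominator is compensated by matching positive powers of $|x-y|$ coming from the derivatives of $\Phi_{d_1}(x)-\Phi_{d_1}(y)$; consequently all derivatives of the integrand up to order two are bounded by $C\,|x-y|^{-\alpha}$, which is integrable on $P\times P$ since $\alpha<2$. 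Dominated convergence then yields $V \in C^2$ near $0$ with $V''$ uniformly bounded.

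To verify $\partial_{d_1}V(0,\ldots,0)=0$, I would invoke the standard shape derivative identity
$$
\partial_{d_1} V(0,\ldots,0) = 2\int_{\partial P} v_P\,(X\cdot\nu)\,\dd\Hone, \qquad X := \partial_{d_1}\Phi_{d_1}\big|_{d_1=0}.
$$
By construction, this perturbation translates $L_1$ outward by $d_1$, translates $L_n$ by $f(d_1,0,\ldots,0)$, and leaves the remaining sides fixed, so $X\cdot\nu \equiv 1$ on $L_1$, $X\cdot\nu \equiv f'(0)$ on $L_n$, and $X\cdot\nu \equiv 0$ elsewhere on $\partial P$. Implicit differentiation of the volume constraint $|P(\mathbf{d})|=|P|$ at $\mathbf{d}=0$ (using $\partial_{d_i}|P(\mathbf{d})|\big|_{\mathbf{d}=0}=\ell_i$) combined with the equilaterality $\ell_1=\ell_n$ from Remark~\ref{rmk:polygons} gives $f'(0)=-1$. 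Furthermore, the isometry $S_{1n}$ from Remark~\ref{rmk:isometry} maps $L_1$ onto $L_n$ and leaves $P$ invariant; together with the isometry invariance of the Riesz kernel this forces $v_P\circ S_{1n}=v_P$, so $\int_{L_1}v_P\,\dd\Hone = \int_{L_n}v_P\,\dd\Hone$. Hence $\partial_{d_1}V(0,\ldots,0) = 2\int_{L_1}v_P\,\dd\Hone - 2\int_{L_n}v_P\,\dd\Hone = 0$.

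The main obstacle lies in the smoothness step: direct estimation of $|V(d_1,0,\ldots,0)-V(0,\ldots,0)|$ via the bound \eqref{eq:lipschitz} applied to $P$ and $P(d_1,0,\ldots,0,f)$ only yields a linear bound $O(d_1)$, missing the quadratic scaling entirely. The bi-Lipschitz change of variables is precisely the device that uncovers the hidden cancellation: it converts the moving-domain problem into a smooth one-parameter deformation of an integrand on the \emph{fixed} set $P\times P$, where Taylor's expansion with vanishing first derivative directly produces the sharp quadratic remainder.
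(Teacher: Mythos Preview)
Your proposal is correct but takes a genuinely different route from the paper. The paper argues by a direct geometric decomposition: writing $\widetilde P=B\cup D_1$, $P=B\cup D_n$ with $B=P\cap\widetilde P$, it splits $|\nl(\widetilde P)-\nl(P)|$ into $|\nl(D_1)-\nl(D_n)|$ and $2|\nli(B,D_1)-\nli(B,D_n)|$. The first term is handled by approximating $D_1,D_n$ by congruent rectangles and using the Lipschitz bound \eqref{eq:lipschitz}; the second term is the heart of the proof, where the isometry $S_{1n}$ (composed with a small translation) is used to transport $D_1$ onto a set $D_1^R$ overlapping $D_n$ up to $O(d_1^2)$, and the residual interaction $\nli(D_1^R,\asymm{B^R}{B})$ is estimated explicitly by reducing to two thin rectangles meeting at a vertex with angle $\theta\in(0,\pi)$.

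Your approach instead pulls everything back to the fixed domain $P\times P$ via a piecewise-affine $\Phi_{d_1}$, proves $V\in C^2$ by dominated convergence (the key point being that Lipschitz continuity of $\partial_{d_1}^k\Phi_{d_1}$ in the space variable exactly compensates the extra negative powers of $|\Phi_{d_1}(x)-\Phi_{d_1}(y)|$), and uses the symmetry only to kill the first derivative. This is cleaner and more robust with respect to the kernel; in fact, once you have $V\in C^2$ in \emph{all} variables $(d_1,\dots,d_{n-1})$ with $\nabla V(0)=0$ (which follows from the same argument, since by Remark~\ref{rmk:isometry} every pair of sides is related by an isometry of $P$), Taylor's theorem yields the Main Lemma directly and the iteration via Lemma~\ref{lem:inductionk} becomes unnecessary. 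The paper's hands-on approach, by contrast, gives more explicit geometric information and makes transparent exactly where the isometry is used at the level of the interaction energies.
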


\begin{proof}
Along this proof we will denote by $O(d_1^\beta)$, for $\beta>0$, any quantity with the property
\begin{equation*}
O(d_1^\beta) \leq C|d_1|^\beta \qquad\text{for all }|d_1|<\e_1
\end{equation*}
(for $\e_1\in(0,1)$ to be chosen small enough), for a constant $C>0$ depending only on $P$.

Fix $|d_1|<\e_1$ and let $d_n=f(d_1,0,\ldots,0)$ be chosen such that $|\widetilde{P}|=|P|$, where we set $\widetilde{P}=P(d_1,0,\ldots,0,d_n)$. Geometrically, we are translating the two adjacent sides $L_1$ and $L_n$ parallel to themselves by $d_1$ and $d_n$ respectively (towards the exterior of the polygon if $d_i$ is positive and towards the interior otherwise). We consider the case $d_1>0$ (and therefore $d_n<0$) in detail, see Figure~\ref{fig:d1}; the case $d_1<0$ will follow by a symmetric argument.
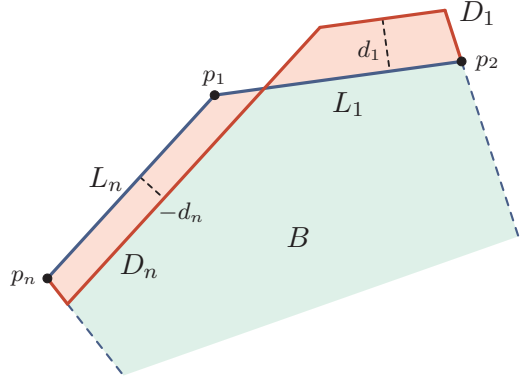
\begin{figure}
\definecolor{yqqqqq}{rgb}{0.8,0,0}
\definecolor{qqttzz}{rgb}{0,0.2,0.6}
\definecolor{qqccqq}{rgb}{0,0.8,0}
\definecolor{zzttqq}{rgb}{1,0.2,0}
\begin{tikzpicture}[scale=1.1,line cap=round,line join=round,>=triangle 45,x=1cm,y=1cm]
\clip(-2.8,-1.2) rectangle (4,3.5);
\fill[line width=0pt,color=zzttqq,fill=zzttqq,fill opacity=0.19] (-2,0) -- (-1.759911892164874,-0.30903305671090253) -- (0.5939518414006387,2.2908966181133485) -- (0,2.209074074892186) -- cycle;
\fill[line width=0pt,color=zzttqq,fill=zzttqq,fill opacity=0.2] (0.5939518414006387,2.2908966181133485) -- (1.2595344211689785,3.0260572289463985) -- (2.7537130477244114,3.2318946163062305) -- (2.9520543304584077,2.615747775028463) -- cycle;
\fill[line width=0pt,color=qqccqq,fill=qqccqq,fill opacity=0.14] (-1.0901880239022148,-1.1710783117952912) -- (-1.759911892164874,-0.30903305671090253) -- (0.5939518414006387,2.2908966181133485) -- (2.9520543304584077,2.615747775028463) -- (3.6309973338521573,0.5066125451744923) -- cycle;
\draw [line width=0.8pt,color=qqttzz,dash pattern=on 3pt off 3pt] (2.9520543304584077,2.615747775028463)-- (3.6309973338521573,0.5066125451744923);
\draw [line width=0.8pt,color=qqttzz,dash pattern=on 3pt off 3pt] (-1.759911892164874,-0.30903305671090253)-- (-1.0901880239022148,-1.1710783117952912);
\draw [line width=0.7pt,dash pattern=on 2pt off 2pt] (-0.8929113890626539,1.2228203745150465)-- (-0.607215058335215,0.9641632892008002);
\draw [line width=0.7pt,dash pattern=on 2pt off 2pt] (2.0066237344466953,3.128975922626314)-- (2.093616980223127,2.49748982497876);
\draw [line width=1.2pt,color=qqttzz] (-2,0)-- (0,2.209074074892186);
\draw [line width=1.2pt,color=qqttzz] (0,2.209074074892186)-- (2.9520543304584077,2.615747775028463);
\draw [line width=1.2pt,color=yqqqqq] (-2,0)-- (-1.759911892164874,-0.30903305671090253);
\draw [line width=1.2pt,color=yqqqqq] (-1.759911892164874,-0.30903305671090253)-- (1.2595344211689785,3.0260572289463985);
\draw [line width=1.2pt,color=yqqqqq] (1.2595344211689785,3.0260572289463985)-- (2.7537130477244114,3.2318946163062305);
\draw [line width=1.2pt,color=yqqqqq] (2.7537130477244114,3.2318946163062305)-- (2.9520543304584077,2.615747775028463);
\begin{scriptsize}
\draw [fill=black] (0,2.209074074892186) circle (1.5pt);
\draw [fill=black] (2.9520543304584077,2.615747775028463) circle (1.5pt);
\draw [fill=black] (-2,0) circle (1.5pt);
\end{scriptsize}
\node at (1.6,2.1) {$L_1$};
\node at (-1.3,1.2) {$L_n$};
\node [left] at (-2,0) {{\footnotesize $p_n$}};
\node [above] at (0,2.2) {{\footnotesize $p_1$}};
\node [right] at (3,2.6) {{\footnotesize $p_2$}};
\node at (1,0.5) {$B$};
\node at (1.85,2.75) {{\footnotesize $d_1$}};
\node at (-0.4,0.8) {{\footnotesize $-d_n$}};
\node at (-0.9,0.15) {$D_n$};
\node [right] at (2.8,3.2) {$D_1$};
\end{tikzpicture}
\caption{The variation considered in Lemma~\ref{lem:induction0} (case $d_1>0$).} \label{fig:d1}
\end{figure}

We denote by $B\defeq P\cap\widetilde{P}$ the bulk of the polygon $P$, which remains untouched by the variation, and by 
\begin{equation*}
D_1\defeq \widetilde{P}\setminus B, \qquad D_n\defeq P\setminus B.
\end{equation*}
In view of the volume constraint we have $|D_1|=|D_n|$. In turn, as each side of $P$ has the same length $\ell$, this implies that
\begin{equation} \label{proof-induction-2}
d_n = - d_1 + O(d_1^2),
\qquad
|D_n|=|D_1|=\ell d_1 + O(d_1^2).
\end{equation}
Furthermore, up to higher order perturbations, the sets $D_1$ and $D_n$ can be approximated by the rectangle $R=[0,\ell]\times[0,d_1]$, in the sense that we can find two rigid motions $T_1,T_n:\R^2\to\R^2$ such that
\begin{equation} \label{proof-induction-3}
|\asymm{\widetilde{D}_i}{R}|=O(d_1^2), \qquad \text{with }\widetilde{D}_i = T_i(D_i).
\end{equation}

We can compute the variation of the nonlocal energy as follows:
\begin{equation} \label{proof-induction-1}
\begin{split}
\big| \nl(\widetilde{P}) - \nl(P) \big|
& = \big| \nl(B\cup D_1) - \nl(B\cup D_n) \big| \\
& = \big| \nl(D_1) + 2\nli(B,D_1) - \nl(D_n) - 2\nli(B,D_n) \big| \\
& \leq \big| \nl(D_1)-\nl(D_n)\big| + 2\big| \nli(B,D_1)-\nli(B,D_n)\big| = (I) + (II).
\end{split}
\end{equation}
We now estimate separately the two terms $(I)$ and $(II)$ on the right-hand side of \eqref{proof-induction-1}.

\medskip\noindent\textit{Estimate on $(I)$.}
Since the nonlocal energy $\nl(D_i)$ is invariant with respect to rigid motions, we can replace the sets $D_i$ by $\widetilde{D}_i$ in $(I)$. Hence, thanks to the estimate \eqref{eq:lipschitz} in Lemma~\ref{lem:potential1}, \eqref{proof-induction-2}, and \eqref{proof-induction-3}, we have
\begin{equation} \label{proof-induction-4}
\begin{split}
(I)
& = \big| \nl(\widetilde{D}_1) - \nl(\widetilde{D}_n)\big| 
\leq 2 c_\alpha|D_1|^{1-\frac{\alpha}{2}} |\asymm{\widetilde{D}_1}{\widetilde{D}_n}| \\
& \leq 2c_\alpha \bigl(\ell d_1 + O(d_1^2)\bigr)^{1-\frac{\alpha}{2}} \Bigl( |\asymm{\widetilde{D}_1}{R}| + |\asymm{\widetilde{D}_n}{R}|\Bigr)
= O(d_1^{3-\frac{\alpha}{2}}).
\end{split}
\end{equation}

\medskip\noindent\textit{Estimate on $(II)$.}
We have to compare the interaction energy of the two small sets $D_1$, $D_n$ with the bulk $B$. The idea here is to transport $D_1$ by a rigid motion to a set $D_1^R$ which maximizes its intersection with $D_n$, and then to exploit the symmetry properties of the polygon $P$ to argue that the interaction of $D_1$ with $B$ is - up to perturbations of order $O(d_1^2)$ - the same as that of $D_1^R$ with $B$, see \eqref{proof-induction-6}.

In order to formalize this strategy, we denote by $H$ the line containing $L_n-|d_n|\nu_n$ and by $H^+$, $H^-$ the two corresponding half-planes (with $H^+$ containing $D_n$). We first consider an isometry $S:\R^2\to\R^2$ such that $S(L_1)=L_n$ and $S(P)=P$, see Remark~\ref{rmk:isometry} (actually, in this particular case the isometry is just the reflection with respect to the bisector of the angle between $L_1$ and $L_n$). With this transformation, the two sets $D_n$ and $S(D_1)$ lie on the two opposite sides of $L_n$. Next, we compose the isometry $S$ with a further translation $T$ by $-|d_n|$ in the direction $\nu_n$ (the exterior normal to $L_n$): we set
\begin{equation*}
D_1^R \defeq T\circ S(D_1), \qquad B^R\defeq T\circ S(B).
\end{equation*}
In this way, one can see that the set $D_1^R$ is isometric to $D_1$ and it overlaps with $D_n$ for most of its volume, in the sense that
\begin{equation} \label{proof-induction-7}
|\asymm{D_1^R}{D_n}| = O(d_1^2).
\end{equation}
We claim that
\begin{equation} \label{proof-induction-6}
\big|\nli(B,D_1)-\nli(B,D_1^R)\big| = O(d_1^2).
\end{equation}
Assuming that the claim \eqref{proof-induction-6} is true, we obtain the desired estimate as follows:
\begin{equation} \label{proof-induction-5}
\begin{split}
(II)
& = 2\big| \nli(B,D_1) - \nli(B,D_n) \big| \xupref{proof-induction-6}{=} 2\big| \nli(B,D_1^R) - \nli(B,D_n) \big| + O(d_1^2) \\
& \leq 2 \nli(B,D_1^R\setminus D_n) + 2 \nli(B, D_n\setminus D_1^R) + O(d_1^2) \\
& \xupref{eq:nlbound2}{\leq} 2c_\alpha |B|^{1-\frac{\alpha}{2}} |\asymm{D_1^R}{D_n}| + O(d_1^2)
\xupref{proof-induction-7}{=} O(d_1^2).
\end{split}
\end{equation}

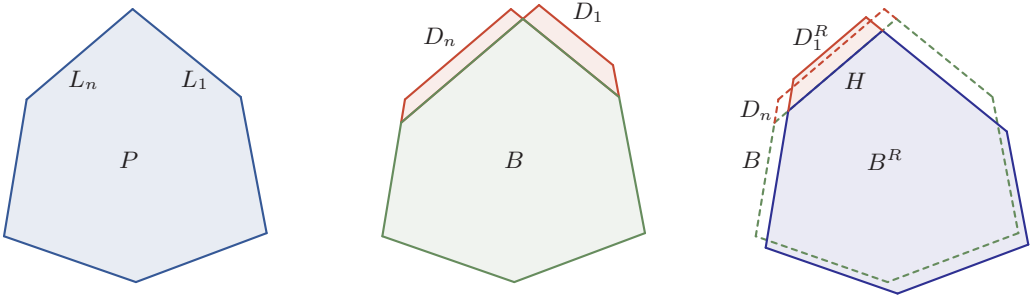
\begin{figure}
\definecolor{qqttcc}{rgb}{0,0.2,0.8}
\definecolor{qqqqff}{rgb}{0,0,1}
\definecolor{ttzzqq}{rgb}{0.2,0.6,0}
\definecolor{ccqqqq}{rgb}{0.8,0,0}
\begin{tikzpicture}[scale=0.65,line cap=round,line join=round,>=triangle 45,x=1cm,y=1cm]
\clip(-19,-0.5) rectangle (5,7);
\fill[line width=1pt,color=qqttcc,fill=qqttcc,fill opacity=0.11] (-17.323574789682244,0.9352393962161015) -- (-16.865935110040915,3.7256648307329048) -- (-14.721711881439338,5.569089613008011) -- (-12.533952407602214,3.777549307400443) -- (-12.009611330486969,0.9988851289092422) -- (-14.65501048396542,0) -- cycle;
\draw [line width=0.8pt,color=qqttcc] (-17.323574789682244,0.9352393962161015)-- (-16.865935110040915,3.7256648307329048);
\draw [line width=0.8pt,color=qqttcc] (-16.865935110040915,3.7256648307329048)-- (-14.721711881439338,5.569089613008011);
\draw [line width=0.8pt,color=qqttcc] (-14.721711881439338,5.569089613008011)-- (-12.533952407602214,3.777549307400443);
\draw [line width=0.8pt,color=qqttcc] (-12.533952407602214,3.777549307400443)-- (-12.009611330486969,0.9988851289092422);
\draw [line width=0.8pt,color=qqttcc] (-12.009611330486969,0.9988851289092422)-- (-14.65501048396542,0);
\draw [line width=0.8pt,color=qqttcc] (-14.65501048396542,0)-- (-17.323574789682244,0.9352393962161015);
\fill[line width=1pt,color=ccqqqq,fill=ccqqqq,fill opacity=0.1] (-9.288542019795122,3.252398286700058) -- (-9.210924626075494,3.7256648307329137) -- (-7.06670139747392,5.569089613008014) -- (-6.824514304827471,5.370764366031469) -- cycle;
\fill[line width=1pt,color=ccqqqq,fill=ccqqqq,fill opacity=0.1] (-6.824514304827471,5.370764366031469) -- (-6.4981377399014715,5.651355789588982) -- (-5.001219497960238,4.425540339258268) -- (-4.878941923636794,3.7775493074004443) -- cycle;
\draw [line width=0.8pt,color=ccqqqq] (-9.288542019795122,3.252398286700058)-- (-9.210924626075494,3.7256648307329137);
\draw [line width=0.8pt,color=ccqqqq] (-9.210924626075494,3.7256648307329137)-- (-7.06670139747392,5.569089613008014);
\draw [line width=0.8pt,color=ccqqqq] (-7.06670139747392,5.569089613008014)-- (-6.824514304827471,5.370764366031469);
\draw [line width=0.8pt,color=ccqqqq] (-6.824514304827471,5.370764366031469)-- (-9.288542019795122,3.252398286700058);
\draw [line width=0.8pt,color=ccqqqq] (-6.824514304827471,5.370764366031469)-- (-6.4981377399014715,5.651355789588982);
\draw [line width=0.8pt,color=ccqqqq] (-6.4981377399014715,5.651355789588982)-- (-5.001219497960238,4.425540339258268);
\draw [line width=0.8pt,color=ccqqqq] (-5.001219497960238,4.425540339258268)-- (-4.878941923636794,3.7775493074004443);
\draw [line width=0.8pt,color=ccqqqq] (-4.878941923636794,3.7775493074004443)-- (-6.824514304827471,5.370764366031469);
\fill[line width=1pt,color=ttzzqq,fill=ttzzqq,fill opacity=0.1] (-9.668564305716822,0.9352393962161007) -- (-9.288542019795122,3.252398286700058) -- (-6.824514304827471,5.370764366031469) -- (-4.878941923636794,3.7775493074004443) -- (-4.3546008465215476,0.9988851289092424) -- (-7,0) -- cycle;
\draw [line width=0.8pt,color=ttzzqq] (-9.668564305716822,0.9352393962161007)-- (-9.288542019795122,3.252398286700058);
\draw [line width=0.8pt,color=ttzzqq] (-9.288542019795122,3.252398286700058)-- (-6.824514304827471,5.370764366031469);
\draw [line width=0.8pt,color=ttzzqq] (-6.824514304827471,5.370764366031469)-- (-4.878941923636794,3.7775493074004443);
\draw [line width=0.8pt,color=ttzzqq] (-4.878941923636794,3.7775493074004443)-- (-4.3546008465215476,0.9988851289092424);
\draw [line width=0.8pt,color=ttzzqq] (-4.3546008465215476,0.9988851289092424)-- (-7,0);
\draw [line width=0.8pt,color=ttzzqq] (-7,0)-- (-9.668564305716822,0.9352393962161007);
\fill[line width=0.8pt,color=qqqqff,fill=qqqqff,fill opacity=0.1] (3.4012371426984513,0.7651236878019863) -- (2.965826264681597,3.0725160750761122) -- (0.4517690036130061,5.13125926310587) -- (-1.4550866368554956,3.491903389625657) -- (-1.9127263164968236,0.7014779551088434) -- (0.7558379892199962,-0.23376144110725683) -- cycle;
\fill[line width=1.2pt,color=ccqqqq,fill=ccqqqq,fill opacity=0.1] (0.4517690036130061,5.13125926310587) -- (0.11876569853224145,5.403953245276756) -- (-1.348363985545077,4.142637174093521) -- (-1.4550866368554956,3.491903389625657) -- cycle;
\draw [line width=0.8pt,color=ttzzqq,dash pattern=on 2pt off 2pt] (-2.1136949600267716,0.9352393962161007)-- (-1.7336726741050708,3.252398286700058);
\draw [line width=0.8pt,color=ttzzqq,dash pattern=on 2pt off 2pt] (-1.7336726741050708,3.252398286700058)-- (0.7303550408625809,5.370764366031469);
\draw [line width=0.8pt,color=ttzzqq,dash pattern=on 2pt off 2pt] (0.7303550408625809,5.370764366031469)-- (2.675927422053257,3.7775493074004443);
\draw [line width=0.8pt,color=ttzzqq,dash pattern=on 2pt off 2pt] (2.675927422053257,3.7775493074004443)-- (3.2002684991685033,0.9988851289092424);
\draw [line width=0.8pt,color=ttzzqq,dash pattern=on 2pt off 2pt] (3.2002684991685033,0.9988851289092424)-- (0.5548693456900509,0);
\draw [line width=0.8pt,color=ttzzqq,dash pattern=on 2pt off 2pt] (0.5548693456900509,0)-- (-2.1136949600267716,0.9352393962161007);
\draw [line width=0.8pt,color=qqqqff] (3.4012371426984513,0.7651236878019863)-- (2.965826264681597,3.0725160750761122);
\draw [line width=0.8pt,color=qqqqff] (2.965826264681597,3.0725160750761122)-- (0.4517690036130061,5.13125926310587);
\draw [line width=0.8pt,color=qqqqff] (0.4517690036130061,5.13125926310587)-- (-1.4550866368554956,3.491903389625657);
\draw [line width=0.8pt,color=qqqqff] (-1.4550866368554956,3.491903389625657)-- (-1.9127263164968236,0.7014779551088434);
\draw [line width=0.8pt,color=qqqqff] (-1.9127263164968236,0.7014779551088434)-- (0.7558379892199962,-0.23376144110725683);
\draw [line width=0.8pt,color=qqqqff] (0.7558379892199962,-0.23376144110725683)-- (3.4012371426984513,0.7651236878019863);
\draw [line width=0.8pt,color=ccqqqq,dash pattern=on 2pt off 2pt] (-1.7336726741050708,3.252398286700058)-- (-1.6560552803854431,3.7256648307329137);
\draw [line width=0.8pt,color=ccqqqq,dash pattern=on 2pt off 2pt] (-1.6560552803854431,3.7256648307329137)-- (0.4881679482161312,5.569089613008014);
\draw [line width=0.8pt,color=ccqqqq,dash pattern=on 2pt off 2pt] (0.4881679482161312,5.569089613008014)-- (0.7303550408625809,5.370764366031469);
\draw [line width=0.8pt,color=ccqqqq] (0.4517690036130061,5.13125926310587)-- (0.11876569853224145,5.403953245276756);
\draw [line width=0.8pt,color=ccqqqq] (0.11876569853224145,5.403953245276756)-- (-1.348363985545077,4.142637174093521);
\draw [line width=0.8pt,color=ccqqqq] (-1.348363985545077,4.142637174093521)-- (-1.4550866368554956,3.491903389625657);
{\footnotesize
\node at (-14.8,2.5) {$P$};
\node [below right] at (-16.2,4.5) {$L_n$};
\node [below left] at (-13,4.5) {$L_1$};
\node at (-7,2.5) {$B$};
\node at (-8.5,5) {$D_n$};
\node at (-5.5,5.5) {$D_1$};
\node at (0.5,2.5) {$B^R$};
\node at (-1,5) {$D_1^R$};
\node at (-2.2,2.5) {$B$};
\node at (-2.1,3.5) {$D_n$};
\node at (-0.1,4.1) {$H$};
}
\end{tikzpicture}
\caption{Left: the starting polygon $P$. Center: the variation $\widetilde{P}$ considered in the proof of Lemma~\ref{lem:induction0}: the bulk $B$ is left untouched, $P=B\cup D_n$, $\widetilde{P}=B\cup D_1$. Right: the dashed line represent the polygon $P=B\cup D_n$; the shaded region is the image of the polygon $\widetilde{P}$ by the isometry $T\circ S$, i.e. the union of the two regions $D_1^R$ and $B^R$.} \label{fig:isometry}
\end{figure}

It only remains to prove the claim \eqref{proof-induction-6}. Due to the invariance of the interaction energy we have that
\begin{equation*}
\begin{split}
\big|\nli(B,D_1)-\nli(B,D_1^R)\big|
& = \big| \nli(B^R,D_1^R)-\nli(B,D_1^R) \big| \\
& = \big| \nli(D_1^R,B^R \setminus B)-\nli(D_1^R,B \setminus B^R) \big| \\
& \leq \nli(D_1^R,\asymm{B^R}{B}) .
\end{split}
\end{equation*}
The estimate of the interaction energy between $D_1^R$ and $\asymm{B^R}{B}$ will follow from the geometric structure of these sets, see Figure~\ref{fig:isometry}. Observe first that the two sets $D_1^R$ and $\asymm{B^R}{B}$ lie on the two opposite sides of the line $H$ ($D_1^R\subset H^+$, $\asymm{B^R}{B}\subset H^-$). Furthermore, the symmetric difference of $B^R$ and $B$ is contained in a neighbourhood of size $2|d_n|$ of the boundary of the original polygon $P$: more precisely,
\begin{equation*}
\asymm{B^R}{B} \subset U\defeq \bigl\{ x\in\R^2\,:\, \mathrm{dist}_P(x,\partial P)<2|d_n| \bigr\} \cap H^-
\end{equation*}
(see \eqref{eq:distance}).
In turn, the set $U$ can be decomposed as disjoint union of $(n-1)$ polygons $U=\bigcup_{i=1}^{n-1} U_i$, each one corresponding approximately to a rectangle of base $\ell$ and height $2|d_n|$ around the side $L_i$. Hence
\begin{equation*}
\nli(D_1^R,\asymm{B^R}{B}) \leq \nli(D_1^R,U) = \sum_{i=1}^{n-1} \nli(D_1^R,U_i) \leq (n-1) \nli(D_1^R,\bar{U}),
\end{equation*}
where $\bar{U}$ is one of the two polygons $U_i$ adjacent to the side $L_n$, for which the interaction with $D_1^R$ is the largest.

As before, up to higher order perturbations, we can approximate the set $D_1^R$ by a rectangle $R_1$ isometric $[0,\ell] \times [0,d_1]$, and $\bar{U}$ by a rectangle $R_2$ isometric to $[0,\ell] \times [0,2d_1]$. The two rectangles are touching each other at one vertex with an angle $\theta\in(0,\pi)$ which corresponds to the internal angle between two adjacent sides of the polygon $P$ (see Figure~\ref{fig:rectangles}). Such approximation is justified since $|\asymm{D_1^R}{R_1}|=O(d_1^2)$, $|\asymm{\bar{U}}{R_2}|=O(d_1^2)$ and hence in view of Lemma~\ref{lem:potential1}
\begin{equation*}
	\nli(D_1^R,\bar{U}) = \nli(R_1,R_2) + O(d_1^2).
\end{equation*}
\begin{figure}
\begin{tikzpicture}[scale=0.7,line cap=round,line join=round,>=triangle 45,x=1cm,y=1cm]
\clip(-7.7,-2.5) rectangle (7.7,2);
\fill[line width=1.2pt,fill=black,fill opacity=0.1] (-7,1) -- (-7,0) -- (0,0) -- (0,1) -- cycle;
\fill[line width=1.2pt,fill=black,fill opacity=0.1] (0.2829888270374986,0.9591232057311202) -- (6.995288866864412,-1.0200184463897062) -- (6.714220942988072,-1.9797063238622956) -- (0,0) -- cycle;
\draw [line width=1.2pt] (-7,1)-- (-7,0);
\draw [line width=1.2pt] (-7,0)-- (0,0);
\draw [line width=1.2pt] (0,0)-- (0,1);
\draw [line width=1.2pt] (0,1)-- (-7,1);
\draw [line width=1.2pt] (0.2829888270374986,0.9591232057311202)-- (6.995288866864412,-1.0200184463897062);
\draw [line width=1.2pt] (6.995288866864412,-1.0200184463897062)-- (6.714220942988072,-1.9797063238622956);
\draw [line width=1.2pt] (6.714220942988072,-1.9797063238622956)-- (0,0);
\draw [line width=1.2pt] (0,0)-- (0.2829888270374986,0.9591232057311202);
\draw [shift={(0,0)},line width=1.2pt]  plot[domain=3.141592653589793:5.996457449924124,variable=\t]({1*0.7490909090909108*cos(\t r)+0*0.7490909090909108*sin(\t r)},{0*0.7490909090909108*cos(\t r)+1*0.7490909090909108*sin(\t r)});
\draw [line width=0.8pt,dash pattern=on 2pt off 2pt] (3.5330964115688666,-2.28351459204545)-- (4,-0.7);
\draw [line width=0.8pt,dash pattern=on 2pt off 2pt] (3.5330964115688666,-2.28351459204545)-- (-0.33675733645832934,-1.1424776791341291);
\draw [line width=0.8pt,dash pattern=on 2pt off 2pt] (-0.33675733645832934,-1.1424776791341291)-- (0,0);
\draw [line width=0.8pt,dash pattern=on 2pt off 2pt] (-5,-1)-- (-5,0.45);
\draw [line width=0.8pt,dash pattern=on 2pt off 2pt] (-7,-1)-- (-7,0);
\draw [line width=0.8pt,dash pattern=on 2pt off 2pt] (-7,-1)-- (-5,-1);
\begin{scriptsize}
\draw [fill=black] (4,-0.7) circle (1.5pt);
\draw [fill=black] (-5,0.45) circle (1.5pt);
\end{scriptsize}
\node [below] at (-6,-1) {{\footnotesize $x_1$}};
\node at (1.45,-2) {{\footnotesize $y_1$}};
\node at (0.2,-0.35) {$\theta$};
\node at (-6.5,0.65) {$R_1$};
\node at (0.5,0.5) {$R_2$};
\node [right] at (-5,0.45) {{\footnotesize $x$}};
\node [right] at (4,-0.7) {{\footnotesize $y$}};
\node [below right] at (-5,0) {\footnotesize $x'$};
\node [below right] at (3.8,-1.2) {\footnotesize $y'$};
\end{tikzpicture}
\caption{Rectangles $R_1$ and $R_2$ of size $[0,\ell]\times[0,d_1]$ approximating the sets $D_1^R$ and $\bar{U}$ in the proof of Lemma~\ref{lem:induction0}.} \label{fig:rectangles}
\end{figure}
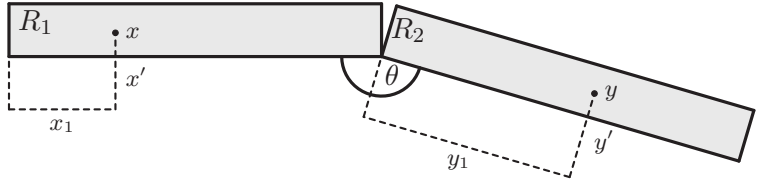
It then remains to estimate $\nli(R_1,R_2)$.
With slight abuse of notation, we identify points $x\in R_1$ and $y\in R_2$ using local coordinates in each rectangle: $x=(x_1,x_2)\in [0,\ell]\times[0,d_1]$, $y=(y_1,y_2)\in [0,\ell]\times[0,2d_1]$. By elementary arguments one can see that the distance between $x$ and $y$ is always larger than or equal to the distance between the corresponding projections $x'=(x_1,0)$, $y'=(y_1,0)$ on the bases of the two rectangles, which in turn obeys the inequality
\begin{equation*}
|x-y| \geq |x'-y'| \geq \sqrt{\frac{1-\cos\theta}{2}} (\ell-x_1+y_1).
\end{equation*}
Therefore, for $\alpha \neq 1$,
\begin{equation*}
\begin{split}
\nli(R_1,R_2) &= \int_{R_1} \int_{R_2} \frac{1}{|x-y|^\alpha} \dd x \dd y \leq C_\theta \int_{R_1} \Bigg( d_1 \int_0^\ell \frac{1}{(\ell-x_1+y_1)^\alpha} \dd y_1 \Bigg) \dd x  \\
				    &= C_\theta \frac{d_1^2 \, \ell^{2-\alpha} (2^{2-\alpha}-2)}{(1-\alpha)(2-\alpha)} = O(d_1^2). 
\end{split}
\end{equation*}
Likewise, for $\alpha=1$, we get that
\begin{equation*}
\nli(R_1,R_2) = \int_{R_1} \int_{R_2} \frac{1}{|x-y|^\alpha} \dd x \dd y \leq C_\theta d_1^2 \, 2\ell \log 2  = O(d_1^2).
\end{equation*}
This proves the estimate \eqref{proof-induction-6}.

\medskip\noindent\textit{Conclusion.}
By inserting \eqref{proof-induction-4} and \eqref{proof-induction-5} into \eqref{proof-induction-1} we obtain
\begin{equation*}
\big| \nl(\widetilde{P}) - \nl(P) \big| = O(d_1^{3-\frac{\alpha}{2}}) + O(d_1^2).
\end{equation*}
As $\alpha<2$, the previous estimate implies the conclusion of the lemma (in the case $d_1>0$).
The case $d_1<0$ follows after relabeling the sets $D_1$ and $D_n$ in the estimates above.
\end{proof}

\begin{lemma} \label{lem:inductionk}
For every $k=2,\ldots,n-1$ there exist $\e_k>0$, $c_k>0$ such that for every choice of $d_1,\ldots,d_{k}$ such that $\sup_{1\leq i \leq k}|d_i|<\e_k$ one has
\begin{equation} \label{eq:inductionk}
\big| V(d_1,\ldots,d_k,0,\ldots,0) - V(d_1,\ldots,d_{k-1},0,\ldots,0) \big| \leq c_k\sup_{1\leq i\leq k}|d_i|^2 .
\end{equation}
\end{lemma}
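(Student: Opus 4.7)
The plan is to adapt the argument of Lemma~\ref{lem:induction0} to the inductive setting in which the role of $P$ is played by $P_1\defeq P(d_1,\ldots,d_{k-1},0,\ldots,0,d_n^{(1)})$ and the role of $\widetilde P$ by $P_2\defeq P(d_1,\ldots,d_k,0,\ldots,0,d_n^{(2)})$, with $d_n^{(1)}$ and $d_n^{(2)}$ determined by the volume constraint \eqref{eq:volumeadj}. Expanding the area to first order and using that all sides of $P\in\pol$ have the common length $\ell$, one gets $d_n^{(2)}-d_n^{(1)}=-d_k+O((\sup_i|d_i|)^2)$. Setting $B\defeq P_1\cap P_2$, $D_k\defeq P_2\setminus P_1$ (the piece near $L_k$) and $D_n\defeq P_1\setminus P_2$ (the piece near $L_n$), both $D_k$ and $D_n$ have area $\ell|d_k|+O(|d_k|\sup_i|d_i|)$ and differ from rectangles of size $\ell\times|d_k|$ by sets of measure $O(|d_k|\sup_i|d_i|)$. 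As in \eqref{proof-induction-1} I would then write
\begin{equation*}
V(d_1,\ldots,d_k,0,\ldots,0)-V(d_1,\ldots,d_{k-1},0,\ldots,0)=\bigl[\nl(D_k)-\nl(D_n)\bigr]+2\bigl[\nli(B,D_k)-\nli(B,D_n)\bigr].
\end{equation*}

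The self-interaction $|\nl(D_k)-\nl(D_n)|$ is controlled as in \eqref{proof-induction-4}: after moving $D_k$ and $D_n$ to a common reference rectangle $R$ by rigid motions, the Lipschitz bound \eqref{eq:lipschitz} and the measure estimate $|\widetilde D_k\triangle R|+|\widetilde D_n\triangle R|=O(|d_k|\sup_i|d_i|)$ give $|\nl(D_k)-\nl(D_n)|=O(|d_k|^{2-\frac{\alpha}{2}}\sup_i|d_i|)$, which is $O((\sup_i|d_i|)^2)$ since $\alpha<2$. For the cross-interaction I would invoke Remark~\ref{rmk:isometry} to pick an isometry $S=S_{kn}$ (possibly a rotation, when $L_k$ and $L_n$ are not adjacent) with $S(L_k)=L_n$ and $S(P)=P$, compose it with the translation $T_0$ by $d_n^{(2)}\nu_n$, and set $\Phi\defeq T_0\circ S$ and $D_k^R\defeq\Phi(D_k)$. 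A direct geometric check shows $|D_k^R\triangle D_n|=O(|d_k|\sup_i|d_i|)$, and the isometry invariance of $\nli$ together with \eqref{eq:nlbound2} gives
\begin{equation*}
|\nli(B,D_k)-\nli(B,D_n)|\le \nli(D_k^R,\Phi(B)\triangle B)+c_\alpha|B|^{1-\frac{\alpha}{2}}|D_k^R\triangle D_n|,
\end{equation*}
whose last term is already $O(|d_k|\sup_i|d_i|)$.

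The main obstacle, compared with the base case, is to bound $\nli(D_k^R,\Phi(B)\triangle B)$ by the same quantity, despite the fact that now $B$ is only $O(\sup_i|d_i|)$-close to $P$ (rather than $O(|d_k|)$-close, as in Lemma~\ref{lem:induction0}). Using $\Phi(P)=P+d_n^{(2)}\nu_n$ and elementary properties of symmetric differences, $\Phi(B)\triangle B$ is contained in a strip of width $O(\sup_i|d_i|)$ around $\partial P$, which I would decompose as $\bigcup_{i=1}^nU_i$ with each $U_i$ a rectangular neighbourhood of width $O(\sup_i|d_i|)$ of the side $L_i$. I would then estimate each $\nli(D_k^R,U_i)$ separately: for sides $L_i$ separated from $L_n$ by a positive distance depending only on $P$, the kernel is uniformly bounded on $D_k^R\times U_i$ and the contribution is trivially $O(|d_k|\sup_i|d_i|)$; for the two sides neighbouring $L_n$ (and, up to a $O((\sup_i|d_i|)^2)$ correction supported near the two end corners, for $L_n$ itself) the sets $D_k^R$ and $U_i$ look like two thin rectangles of widths $|d_k|$ and $O(\sup_i|d_i|)$ meeting at a vertex under an interior angle $\theta\in(0,\pi)$ of $P$, and the explicit double-integral computation at the end of the proof of Lemma~\ref{lem:induction0} (which crucially uses $\alpha<2$) yields $\nli(D_k^R,U_i)=O(|d_k|\sup_i|d_i|)$. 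Summing over the $n$ sides and combining with the previous estimates gives $|\nl(P_2)-\nl(P_1)|=O(|d_k|\sup_i|d_i|)\le c_k(\sup_i|d_i|)^2$, which is \eqref{eq:inductionk}; the case $d_k<0$ is symmetric, interchanging the roles of $D_k$ and $D_n$.
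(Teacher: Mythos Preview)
Your argument is correct and follows the same strategy as the paper's proof: the same decomposition \eqref{proof-induction-1} into self-interaction and cross-interaction, the same use of the isometry $S_{kn}$ from Remark~\ref{rmk:isometry} composed with a translation along $\nu_n$, and the same reduction of the cross term to the explicit rectangle computation at the end of Lemma~\ref{lem:induction0}. The only cosmetic differences are that you track the sharper order $O(|d_k|\sup_i|d_i|)$ rather than $O(\delta_k^2)$, and that you include the strip $U_n$ along $L_n$ in your decomposition of $\Phi(B)\triangle B$; with your choice of translation $T_0$ by $d_n^{(2)}\nu_n$ the $L_n$-faces of $B$ and $\Phi(B)$ coincide, so $U_n$ is indeed supported only near the two corners and your parenthetical remark handles it correctly.
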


\begin{proof}
The proof of this lemma follows using estimates almost identical to those in the proof of Lemma~\ref{lem:induction0}. We start by introducing some notation. Let
\begin{align*}
&\mathbf{d}_{k-1} \defeq (d_1,\ldots,d_{k-1},0,\ldots,0)\in\R^{n-1}, &  &P_{k-1}\defeq P(\mathbf{d}_{k-1},f(\mathbf{d}_k-1)), \\
&\mathbf{d}_{k} \defeq (d_1,\ldots,d_{k-1},d_k,0,\ldots,0)\in\R^{n-1},  & &P_{k}\defeq P(\mathbf{d}_{k},f(\mathbf{d}_k)),
\end{align*}
where $f$ is the function defined in \eqref{eq:volumeadj}: the polygon $P_{k-1}$ is obtained from $P$ by translating the first $k-1$ sides by $d_1,\ldots,d_{k-1}$, with the side $L_n$ shifted by $f(\mathbf{d}_{k-1})$ in order to restore the volume constraint; the polygon $P_k$ is obtained from $P_{k-1}$ by translating the side $L_k$ by $d_k$, and the side $L_n$ is shifted by
\begin{equation*}
d_n \defeq f(\mathbf{d}_k)-f(\mathbf{d}_{k-1}).
\end{equation*}
Define also $B \defeq P_{k-1} \cap P_k$, $D_k \defeq P_k \setminus B$, and $D_n \defeq P_{k-1} \setminus B$.
Then, as in \eqref{proof-induction-1}, we get
\begin{equation} \label{nonlocal-diff-k}
\big| \nl(P_k) - \nl(P_{k-1}) \big|  \leq \big| \nl(D_k)-\nl(D_n)\big| + 2\big| \nli(B,D_k)-\nli(B,D_n)\big| = (I) + (II).
\end{equation}
To estimate $(I)$, let $\delta_k \defeq \sup_{1\leq i \leq k}|d_i|$ and note that both $D_k$ and $D_n$ have a side length of $\ell + O(\delta_{k-1})$, $d_n = -d_k + O(d_k^2)$, and $|D_k| = |D_n| = \ell |d_k| + O(\delta_k^2)$. Moreover, as in the proof of the previous lemma, they can be approximated by a rectangle $R$ of side lengths $\ell$ and $|d_k|$ in such a way that 
\begin{equation*}
|\asymm{\widetilde{D}_i}{R}| = O(\delta_k^2), \qquad \text{with } \widetilde{D}_i = T_i(D_i) \text{ for } i=k, n
\end{equation*}
for some rigid motions $T_k, T_n \colon \R^2 \to \R^2$. This, in turn, implies that $|\asymm{\widetilde{D}_k}{\widetilde{D}_n}|=O(\delta_k^2)$, hence, as in \eqref{proof-induction-4}, we obtain that
\begin{equation*}
(I) = \big| \nl(D_k) - \nl(D_{k-1}) \big| = O(\delta_k^2).
\end{equation*}

Now let $S$ be the isometry such that $L_n = S(L_k)$ (see Remark~\ref{rmk:isometry}). Note that $B$ is a polygon with $n$ sides parallel to those of $P$ and each side is of length $\ell + O(\delta_{k})$. As in the estimate of the second term in the proof of Lemma~\ref{lem:induction0}, let $T$ be the translation by $f(\mathbf{d}_k)$ in the direction $\nu_n$ and set $D_k^R \defeq T \circ S(D_k)$ and $B^R \defeq T \circ S(B)$. Hence, $|\asymm{D_k^R}{D_n}| = O(\delta_k^2)$. An important difference here is that the polygon $P_{k-1}$ is not necessarily in the class $\pol$ (and invariant under $S$); however, it is a small perturbation of the polygon $P\in\pol$. Therefore $|\asymm{B}{P}|=|\asymm{S(B)}{P}|=O(\delta_{k})$. Since $|\asymm{T \circ S(B)}{S(B)}| \leq \diam(S(B))|f(\mathbf{d}_k)| = O(\delta_k)$, we get that $|\asymm{B^R}{B}|=O(\delta_k)$. This yields a control on the interaction with the bulk as in \eqref{proof-induction-6}. Then by repeating the estimate \eqref{proof-induction-5} we obtain
\begin{equation*}
(II) = 2\big| \nli(B,D_k)-\nli(B,D_n)\big| = O(\delta_k^2),
\end{equation*}
and conclude that \eqref{eq:inductionk} holds.
\end{proof}

By combining Lemma~\ref{lem:induction0} and Lemma~\ref{lem:inductionk} we can now give the proof of the Main Lemma in Section~\ref{sec:main} by an iteration argument.

\begin{proof}[Proof of Main Lemma]
We set
\begin{equation*}
\e_0\defeq\min_{1\leq i\leq n-1}\e_i\,, \qquad c_0\defeq\sum_{i=1}^{n-1} c_i
\end{equation*}
(where $\e_i$ and $c_i$ are given by Lemma~\ref{lem:induction0} and Lemma~\ref{lem:inductionk}).
For every $P(\mathbf{d})\in\FZ(P,\e_0)$, with $\mathbf{d}=(d_1,\ldots,d_n)$, we then find
\begin{align*}
\big| \nl(P(\mathbf{d}))-\nl(P) \big|
& = \big| V(d_1,\ldots,d_{n-1}) - V(\mathbf{0}) \big| \\
& \leq \big| V(d_1,\ldots,d_{n-1}) - V(d_1,\ldots,d_{n-2},0) \big| + \big| V(d_1,\ldots,d_{n-2},0) - V(\mathbf{0}) \big| \\
& \xupref{eq:inductionk}{\leq} c_{n-1} \biggl(\sup_{1\leq i\leq n-1}|d_i|^2\biggr) + \big| V(d_1,\ldots,d_{n-2},0) - V(\mathbf{0}) \big| \,,
\end{align*}
and therefore by iteration
\begin{align*}
\big| \nl(P(\mathbf{d}))-\nl(P) \big|
& \leq \sum_{i=2}^{n-1}c_{i} \biggl( \sup_{1\leq i\leq n-1}|d_i|^2 \biggr) + \big| V(d_1,0,\ldots,0) - V(\mathbf{0}) \big| \\
& \xupref{eq:induction0}{\leq} c_0 \biggl( \sup_{1\leq i\leq n-1}|d_i|^2 \biggr)  \leq c_0 |\mathbf{d}|_\infty^2 .
\end{align*}
Finally, by observing that (by reducing $\e_0$ if necessary)
\begin{equation*}
|\mathbf{d}|_\infty \leq C |\asymm{P}{P(\mathbf{d})}|
\end{equation*}
for a constant $C>0$ depending only on the initial polygon $P$, we conclude that the estimate \eqref{eq:quadratic} holds with a possibly larger constant $c_0$.
\end{proof}


\section{Proofs of the results in Section~\ref{sec:main}}\label{sec:proofs}

In this section we collect the arguments for the proof of the main results of the paper, stated in Section~\ref{sec:main}.

\begin{proof}[Proof of Theorem~\ref{thm:main1}]
It is proved in \cite[Theorem~3.1]{ChNeuTo20} that there exists $\gamma_1>0$ such that for every $\gamma<\gamma_1$ the minimum problem \eqref{eq:min} has a solution $E_\gamma$. Up to translations, we can assume that $\min_{y\in\R^2}|\asymm{E_\gamma}{(y+P)}|=\asymm{E_\gamma}{P}$. By combining the minimality of $E_\gamma$ with the quantitative Wulff inequality \eqref{eq:quantisop} we obtain
\beqn \label{proofthm1-1}
\bar{c}\,|\asymm{E_\gamma}{P}|^2 \leq \per_\psi(E_\gamma) - \per_\psi(P) \leq \gamma\bigl( \nl(P)-\nl(E_\gamma)\bigr) \leq 2 \gamma c_{\alpha,2}|\asymm{E_\gamma}{P}|,
\eeqn
where in the last inequality we used the Lipschitz continuity of the nonlocal energy with respect to the $L^1$-norm, see \eqref{eq:lipschitz}. The estimate \eqref{proofthm1-1} implies in particular that $E_\gamma\to P$ in $L^1$ as $\gamma\to0$.

Next, we observe that, by a similar estimate using the minimality of $E_\gamma$ and \eqref{eq:lipschitz}, for every set of finite perimeter $F\subset\R^2$ with $|F|=1$ we have
\beqn \label{proofthm1-2}
\per_\psi(E_\gamma) \leq \per_\psi(F) + 2\gamma c_{\alpha,2} |\asymm{E_\gamma}{F}|,
\eeqn
that is, $E_\gamma$ is a $(2\gamma c_{\alpha,2})$-minimizer of the anisotropic perimeter $\per_\psi$. Hence, thanks to \cite[Theorem~7]{FigalliMaggiARMA}, if $\gamma$ is sufficiently small the minimizer $E_\gamma$ is a convex polygon whose sides are parallel to those of $P$, and is uniformly close to $P$ by \eqref{proofthm1-1}: more precisely, there exists $\gamma_2\in(0,\gamma_1)$ such that for all $\gamma<\gamma_2$ we have $E_\gamma\in\FZ(P,\e_0)$, where $\e_0>0$ is the constant provided by the Main Lemma.

We can now conclude the proof thanks to the quadratic bound \eqref{eq:quadratic}: indeed, combining this estimate with the minimality of $E_\gamma$ and the quantitative Wulff inequality \eqref{eq:quantisop} we obtain
\beqn \label{proofthm1-3}
\bar{c}\,|\asymm{E_\gamma}{P}|^2 \leq \per_\psi(E_\gamma) - \per_\psi(P) \leq \gamma\bigl( \nl(P)-\nl(E_\gamma)\bigr) \leq \gamma c_0|\asymm{E_\gamma}{P}|^2,
\eeqn
which implies that $|\asymm{E_\gamma}{P}|=0$ provided that $\gamma<\frac{\bar{c}}{c_0}$.
\end{proof}

We now prove that a critical point of the nonlocal energy $\nl(\cdot)$, with respect to perturbations in the class $\FZ$, satisfies the first order conditions \eqref{eq:critical}.

\begin{proof}[Derivation of conditions \eqref{eq:critical}]
We consider a convex polygon $P$ with $n$ sides $\{L_i\}_{i=1}^n$ and corresponding side lengths $\{\ell_i\}_{i=1}^n$, and its perturbations in the class $\FZ(P,\e)$. With the notation of Section~\ref{sec:quadratic}, we impose the conditions
\begin{equation} \label{proof-crit-1}
\frac{\partial V}{\partial d_i}(0,\ldots,0)=0 \qquad\text{for all }i=1,\ldots,n-1,
\end{equation}
where $V$ is the function introduced in \eqref{eq:V}. Let us compute the derivative $\frac{\partial V}{\partial d_1}$, corresponding to a shifting of the side $L_1$ by a small quantity $d_1$ (at the same time the side $L_n$ is shifted by $d_n$ to adjust the volume constraint): we refer once again to Figure~\ref{fig:d1} and to the notation introduced in Lemma~\ref{lem:induction0}.

Since we are not assuming that the sides of the polygon have equal length, the condition \eqref{proof-induction-2} has to be replaced by
\begin{equation} \label{proof-crit-2}
d_n = -\frac{\ell_1}{\ell_n}d_1 + O(d_1^2).
\end{equation}
We have for $d_1>0$ sufficiently small
\begin{equation} \label{proof-crit-3}
\frac{1}{d_1}\Bigl( V(d_1,0,\ldots,0) - V(0,\ldots,0)\Bigr)
= \frac{1}{d_1}\Bigl( \nl(D_1) +2\nli(D_1,B) - \nl(D_n) - 2\nli(D_n,B) \Bigr).
\end{equation}
Since $|D_1|=|D_n|=\ell_1d_1+O(d_1^2)$, it follows from Lemma~\ref{lem:potential1} that $\nl(D_1), \nl(D_n)=o(d_1)$ as $d_1\to0$, so that these two terms are asymptotically negligible in \eqref{proof-crit-3}. Furthermore, denoting by $R_1$ the rectangle with base $L_1$ and height $d_1$ (on the exterior of the polygon $P$), we have the estimates
\begin{equation*}
|\asymm{B}{P}| = |D_n|=\ell_1d_1+O(d_1^2), \qquad |\asymm{D_1}{R_1}|=O(d_1^2),
\end{equation*}
from which it follows, again by Lemma~\ref{lem:potential1}, that $\nli(D_1,B)=\nli(R_1,P) + o(d_1)$. Therefore
\begin{equation*}
\begin{split}
\lim_{d_1\to0^+}\frac{\nli(D_1,B)}{d_1}
& =\lim_{d_1\to0^+}\frac{\nli(R_1,P)}{d_1} \\
& = \lim_{d_1\to0^+}\frac{1}{d_1}\int_{0}^{d_1}\mathrm{d}t \int_{L_1}\mathrm{d}\Hone(x) \int_{P}\frac{\dd y}{|y-(x+t\nu_1)|^\alpha}\\
& = \int_{L_1}\int_P\frac{1}{|x-y|^\alpha}\dd y\dd\Hone(x).
\end{split}
\end{equation*}
Arguing similarly for $D_n$, we let $R_n$ be the rectangle of base $L_n$ and height $|d_n|$ (inside the polygon $P$), and taking into account \eqref{proof-crit-2} we find
\begin{equation*}
\begin{split}
\lim_{d_1\to0^+}\frac{\nli(D_n,B)}{d_1}
& =\lim_{d_1\to0^+}\frac{\nli(R_n,P)}{d_1} \\
& = \lim_{d_1\to0^+}\frac{1}{d_1}\int_{0}^{\frac{\ell_1}{\ell_n}d_1+o(d_1)}\mathrm{d}t \int_{L_n}\mathrm{d}\Hone(x) \int_{P}\frac{\dd y}{|y-(x-t\nu_n)|^\alpha}\\
& = \frac{\ell_1}{\ell_n}\int_{L_n}\int_P\frac{1}{|x-y|^\alpha}\dd y\dd\Hone(x).
\end{split}
\end{equation*}
Therefore inserting all the previous estimates into \eqref{proof-crit-3} we conclude that
\begin{equation*}
\begin{split}
\lim_{d_1\to0^+} \frac{1}{d_1}\Bigl( & V(d_1,0,\ldots,0) - V(0,\ldots,0)\Bigr)\\
& = 2\biggl(\int_{L_1}\int_P\frac{1}{|x-y|^\alpha}\dd y\dd\Hone(x) - \frac{\ell_1}{\ell_n}\int_{L_n}\int_P\frac{1}{|x-y|^\alpha}\dd y \dd\Hone(x)\biggr).
\end{split}
\end{equation*}
The computation for $d_1$ negative is similar and yields that the right-hand side in the previous identity is exactly the partial derivative $\frac{\partial V}{\partial d_1}(0,\ldots,0)$. By imposing \eqref{proof-crit-1} we then obtain \eqref{eq:critical} for $i=1$ and $j=n$; the other conditions follow by considering the analogous variations for the other sides.
\end{proof}

We next give the proof of Theorem~\ref{thm:main2}, following the argument in \cite[Theorem~14]{FigalliMaggiARMA}.

\begin{proof}[Proof of Theorem~\ref{thm:main2}]
We first observe that there exists $\omega>0$, depending on $\alpha$, $\gamma$, $\delta_0$, and $E$, such that $E$ is an $\omega$-minimizer of the anisotropic perimeter $\per_\psi$. Indeed, for every $F\subset\R^2$ with finite perimeter such that $|F|=|E|$ and $|\asymm{E}{F}|<\delta_0$ we have by local minimality of $E$
\begin{equation*}
\per_\psi(E) \leq \per_\psi(F) + \gamma\big|\nl(F)-\nl(E) \big| \leq \per_\psi(F) + 2c_{\alpha,2}\gamma |E|^{1-\frac{\alpha}{2}}|\asymm{E}{F}|,
\end{equation*}
where the last inequality follows from Lemma~\ref{lem:potential1}. Moreover for competitors $F$ such that $|F|=|E|$ and $|\asymm{E}{F}|\geq\delta_0$ we have trivially
\begin{equation*}
\per_\psi(E) \leq \per_\psi(F) + \frac{\per_\psi(E)}{\delta_0}|\asymm{E}{F}|.
\end{equation*}
The $\omega$-minimality of $E$ then follows by taking $\omega\defeq\max\{ 2c_{\alpha,2}\gamma |E|^{1-\frac{\alpha}{2}},\per_\psi(E)/\delta_0 \}$.
We can therefore apply \cite[Theorem~3]{FigalliMaggiARMA} to deduce that $\Hone(\partial E\setminus\partial^*E)=0$ and $\partial E$ is differentiable at every point of $\partial^*E$.

These regularity properties are sufficient to repeat the construction of the perturbation considered in \cite[Theorem~14]{FigalliMaggiARMA}. It is shown that, given $\bar{x}\in\partial^*E$ such that $\nu_E(x)\notin\{\nu_i\}_{i=1}^n$, there exists $\sigma_0(\bar{x})>0$ with the following property: for every $\sigma\in(0,\sigma_0(\bar{x}))$ one can construct two sets
\begin{equation*}
R_+^\sigma(\bar{x}) \subset \R^2\setminus E, \qquad R_-^\sigma(\bar{x}) \subset E, \qquad\text{with }|R_+^\sigma(\bar{x})|=|R_-^\sigma(\bar{x})|=\sigma,
\end{equation*}
such that
\begin{equation} \label{proofthm2-1}
\per_\psi(E\cup R_+^\sigma(\bar{x})) \leq \per_\psi(E),
\qquad
\per_\psi(E\setminus R_-^\sigma(\bar{x})) \leq \per_\psi(E).
\end{equation}
Moreover, the two sets are localized around $\bar{x}$ in the sense that there exists $\rho(\sigma)>0$, with $\rho(\sigma)\to 0$ as $\sigma \to 0$, such that $R_{\pm}^\sigma(\bar{x})$ are contained in the ball $B_{\rho(\sigma)}(\bar{x})$.

Let now $x_1,x_2\in\partial^*E$ be such that $\nu_E(x_i)\notin\{\nu_i\}_{i=1}^n$. The proof will be achieved if we show that $v_E(x_1)=v_E(x_2)$.
For $0<\sigma<\sigma_0\defeq\min\{\sigma_0(x_1),\sigma_0(x_2)\}$ we consider the two sets $R_+^\sigma\defeq R_+^\sigma(x_1)$, $R_-^\sigma\defeq R_-^\sigma(x_2)$. By reducing $\sigma_0$ if necessary we can guarantee that $R_+^\sigma\cap R_-^\sigma=\emptyset$. We then consider the competitor
\begin{equation*}
F^\sigma\defeq (E\cup R_+^\sigma)\setminus R_-^\sigma, \qquad |F^\sigma|=|E|.
\end{equation*}
If $\sigma_0$ is sufficiently small we have $|\asymm{F^\sigma}{E}|<\delta_0$ and therefore by local minimality of $E$
\begin{equation*}
\per_\psi(E) + \gamma\nl(E) \leq \per_\psi(F^\sigma) + \gamma\nl(F^\sigma) \leq \per_\psi(E) + \gamma\nl(F^\sigma).
\end{equation*}
The last inequality follows from \eqref{proofthm2-1} taking into account that the perturbations $R_{\pm}^\sigma$ are localized in two disjoint balls $B_{\rho(\sigma)}(x_1)$, $B_{\rho(\sigma)}(x_2)$. Hence
\begin{align} \label{proofthm2-2}
\nl(E)
&\leq \nl(F^\sigma) 
= \nl(E) + 2\nli(E,R^\sigma_+) -2\nli(E,R^\sigma_-) + \nl(R^\sigma_+) + \nl(R^\sigma_-) - 2\nli(R^\sigma_+,R^\sigma_-).
\end{align}
In view of Lemma~\ref{lem:potential1} we have
\begin{equation*}
\big| \nl(R^\sigma_+) + \nl(R^\sigma_-) - 2\nli(R^\sigma_+,R^\sigma_-) \big| \leq 4c_{\alpha,2}\, \sigma^{2-\frac{\alpha}{2}}.
\end{equation*}
Therefore dividing \eqref{proofthm2-2} by $\sigma$ and letting $\sigma\to0^+$ we obtain
\begin{equation*}
\lim_{\sigma\to0^+} \frac{\nli(E,R^\sigma_-)}{\sigma} \leq \lim_{\sigma\to0^+}\frac{\nli(E,R^\sigma_+)}{\sigma},
\end{equation*}
or equivalently
\begin{equation*}
\lim_{\sigma\to0^+} \frac{1}{|R^\sigma_-|}\int_{R^\sigma_-} v_E(x)\dd x  \leq \lim_{\sigma\to0^+} \frac{1}{|R^\sigma_+|}\int_{R^\sigma_+} v_E(x)\dd x .
\end{equation*}
By continuity of the potential we conclude that $v_E(x_2)\leq v_E(x_1)$. The opposite inequality follows by inverting the roles of $x_1$ and $x_2$.
\end{proof}


\section{Higher-dimensional case}\label{sec:3d}

This section is devoted to proving the analogous of Theorem~\ref{thm:main1} in higher dimension.
In the following we will fix the dimension $d\geq 2$.
We first introduce the class of polytopes that we consider.

\begin{definition}\label{def:class_gen}
For $n\geq d+1$, we let $\pol(d)$ to be the class of convex polytopes $P\subset\R^d$ with $n$ faces $F_1, \dots, F_n$  and unit volume $|P|=1$ such that for every $i, j\in\{1,\dots,n\}$ there exists an isometry $S_{ij}:\R^d\to\R^d$ such that $S_{ij}(F_i)=F_j$ and $S_{ij}(P)=P$.
\end{definition}

\begin{remark}
This is the straightforward generalization of the class of polygons considered in the two dimensional case (see Definition~\ref{def:polygons} and Remark~\ref{rmk:isometry}).
Notice that regular polytopes belong to $\pol(d)$.
\end{remark}

We are now in position to state the main result of this section.

\begin{theorem}[Minimality of polytopes in $\pol(d)$] \label{thm:main3}
Let $P\in\pol(d)$ and let $\psi$ be a surface energy density whose Wulff shape is $P$.
Then there exists $\bar\gamma>0$ such that for all $\gamma<\bar{\gamma}$ the polytope $P$ is the unique (up to translations) solution to \eqref{eq:min}.
\end{theorem}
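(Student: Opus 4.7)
The strategy mirrors the proof of Theorem~\ref{thm:main1}: combine the quantitative Wulff inequality \eqref{eq:quantisop}, the Figalli--Zhang regularity theorem \cite{FigZha} (the higher-dimensional extension of \cite[Theorem~7]{FigalliMaggiARMA}), and a dimension-$d$ analogue of the Main Lemma. With these three ingredients the chain \eqref{proofthm1-1}--\eqref{proofthm1-3} transfers almost verbatim: any solution $E_\gamma$ of \eqref{eq:min} is an $\omega$-minimizer of $\per_\psi$ with $\omega=2c_{\alpha,d}\gamma$, hence for $\gamma\ll1$ it is a polytope whose faces are parallel to those of $P$ and uniformly close to $P$ in $L^1$; the quadratic bound and the stability of the Wulff inequality then force $|\asymm{E_\gamma}{P}|=0$ as soon as $\gamma<\bar c/c_0$, which also gives uniqueness up to translation.

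The substantive task is therefore to establish in dimension $d$ the quadratic estimate $|\nl(P)-\nl(\widetilde P)|\leq c_0|\asymm{P}{\widetilde P}|^2$ on the natural class $\FZ(P,\e_0)$ of polytopes obtained from $P$ by translating each face $F_i$ along its exterior normal $\nu_i$ by a small amount $d_i$ (with one parameter used to restore the volume constraint). The plan is to reproduce the iteration scheme of Lemmas~\ref{lem:induction0}--\ref{lem:inductionk}: freezing all but two parameters, one translates a single face $F_k$ by $d_k$ and compensates with a shift of the distinguished face $F_n$ by $d_n=-(|F_k|/|F_n|)d_k+O(d_k^2)$. Setting $B=P_{k-1}\cap P_k$, $D_k=P_k\setminus B$, $D_n=P_{k-1}\setminus B$, the splitting \eqref{nonlocal-diff-k} reduces matters to $|\nl(D_k)-\nl(D_n)|$ and $|\nli(B,D_k)-\nli(B,D_n)|$. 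The first quantity is $O(|d_k|^{3-\alpha/d})=o(|d_k|^2)$ by Lemma~\ref{lem:potential1}, since $\alpha<d$. For the second, the isometry $S_{kn}$ supplied by Definition~\ref{def:class_gen} (the very property singled out in Remark~\ref{rmk:isometry} for $d=2$), composed with a short translation $T$ in the direction $\nu_n$, maps $D_k$ onto a set $D_k^R$ that coincides with $D_n$ up to a symmetric difference of measure $O(|d_k|^2)$, and maps $B$ onto a set $B^R$ with $|\asymm{B^R}{B}|=O(|d_k|)$ localized within a slab of thickness $O(|d_k|)$ around $\partial P$. As in \eqref{proof-induction-5}, this reduces the estimate to controlling $\nli(D_k^R,\asymm{B^R}{B})$.

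The only genuinely new point, compared with the planar case, is the interaction between the thin external slab $D_k^R$ and each neighbouring slab in $\asymm{B^R}{B}$ that shares a $(d-2)$-dimensional face with it. I would parametrize each of the two $(d-1)$-dimensional prisms by a distance $r_i\in[0,L]$ from the shared $(d-2)$-face, a transverse thickness $s_i\in[0,|d_k|]$, and a tangential parameter $\tilde x\in\R^{d-2}$ ranging in a bounded set $K$. A direct computation gives, for $|d_k|$ small, the lower bound $|x-y|^2\geq C_\theta^2(r_1^2+r_2^2)+|\tilde x-\tilde y|^2$ where $\theta\in(0,\pi)$ is the dihedral angle between the two faces. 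Integrating out the two thickness variables provides the factor $|d_k|^2$, and the remaining integral
\begin{equation*}
\int_0^L\int_0^L\int_{K-K}\frac{dr_1\,dr_2\,dz}{(r_1^2+r_2^2+|z|^2)^{\alpha/2}}
\end{equation*}
is finite because the integrand decays like $\rho^{-\alpha}$ against a $(d-1)$-dimensional sphere of radius $\rho$ in the $d$-dimensional parameter space $\R^2\times\R^{d-2}$, and this is integrable at the origin precisely when $\alpha<d$. This is the step I expect to be the main obstacle, as it is exactly where the hypothesis $\alpha<d$ is consumed; once it is carried out, it yields the desired $O(|d_k|^2)$ bound on each induction step, the iteration argument delivers the $d$-dimensional Main Lemma, and the concluding combination of ingredients produces Theorem~\ref{thm:main3}.
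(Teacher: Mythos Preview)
Your proposal is correct and follows essentially the same route as the paper: reduce to a $d$-dimensional quadratic bound on $\FZ(P,\e_0)$ via the Figalli--Zhang theorem and the quantitative Wulff inequality, and prove that bound by the iteration scheme of Lemmas~\ref{lem:induction0}--\ref{lem:inductionk}, using the isometry $S_{kn}$ from Definition~\ref{def:class_gen} to transport $D_k$ onto $D_n$ up to $O(|d_k|^2)$ and then controlling the slab--slab interaction by the condition $\alpha<d$. The only cosmetic difference is in the bookkeeping of that last integral: the paper bounds $|x-y|\geq C\,|\pi_1(x)-\pi_i(y)|$ (projections onto the two faces) and invokes directly that $\int_{F_1}\int_{F_i}|x'-y'|^{-\alpha}\,\dd\Hn\,\dd\Hn<\infty$ for $\alpha<d$, which is the same computation as your explicit parametrization by distance from the shared $(d-2)$-face.
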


Since minimizers of \eqref{eq:min} are also $\omega$-minimizers of the anisotropic perimeter for some $\omega$ proportional to $\gamma$, \cite[Theorem~1.1]{FigZha} allows to consider only competitors in the class $\FZ(P,\e)$ (see Definition~\ref{def:fzclass}, whose generalization to dimension $d>2$ is straightforward). Therefore Theorem~\ref{thm:main3} follows by using the same strategy that we used to prove Theorem~\ref{thm:main1}, once we get the following estimate.

\begin{lemma} \label{lem:general_estimate}
Let $P\in\pol(d)$.
Then there exist $\e_0>0$ and $c_0>0$ such that for every $\widetilde{P}\in\FZ(P,\e_0)$ one has the quadratic estimate
\beqn \label{eq:quadratic2}
\big| \nl(\widetilde{P}) - \nl(P)  \big| \leq c_0 |\asymm{P}{\widetilde{P}}|^2 \,.
\eeqn
\end{lemma}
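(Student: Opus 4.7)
The plan is to mirror the architecture of the proof of the Main Lemma in Section~\ref{sec:quadratic}, generalizing each geometric step from two to $d$ dimensions by systematically replacing sides by $(d-1)$-dimensional faces and vertex-angle arguments by dihedral-angle arguments. As in Section~\ref{sec:main}, a variation $P(\mathbf{d})\in\FZ(P,\e)$ corresponds to translating each face $F_i$ of $P$ in its outward normal direction by $d_i$, and imposing $|P(\mathbf{d})|=|P|$ determines $d_n$ as a smooth function $d_n=f(d_1,\ldots,d_{n-1})$ of the remaining parameters for $\e$ small. Define $V(d_1,\dots,d_{n-1})\defeq\nl(P(\mathbf{d}))$ as in \eqref{eq:V}. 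The estimate \eqref{eq:quadratic2} will be obtained by iteration once we establish the single-face estimates
\begin{equation*}
\bigl| V(d_1,0,\ldots,0)-V(\mathbf{0})\bigr|\leq c_1|d_1|^2,\qquad
\bigl| V(\ldots,d_k,0\ldots)-V(\ldots,d_{k-1},0\ldots)\bigr|\leq c_k\sup_{i\leq k}|d_i|^2,
\end{equation*}
which are the $d$-dimensional analogues of Lemmas~\ref{lem:induction0} and~\ref{lem:inductionk}; the final bound then follows from $|\mathbf{d}|_\infty\leq C|\asymm{P}{P(\mathbf{d})}|$ as in the proof of the Main Lemma.

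The core step is the single-face estimate, and I would proceed exactly as in Lemma~\ref{lem:induction0}. Writing $\widetilde{P}=P(d_1,0,\ldots,0,d_n)$, $B\defeq P\cap\widetilde{P}$, $D_1\defeq\widetilde{P}\setminus B$, $D_n\defeq P\setminus B$, the volume constraint yields $d_n=-\frac{\Hn(F_1)}{\Hn(F_n)}d_1+O(d_1^2)$ and $|D_1|=|D_n|=\Hn(F_1)|d_1|+O(d_1^2)$. Splitting
\begin{equation*}
\bigl|\nl(\widetilde{P})-\nl(P)\bigr|\leq \bigl|\nl(D_1)-\nl(D_n)\bigr|+2\bigl|\nli(B,D_1)-\nli(B,D_n)\bigr|=(I)+(II),
\end{equation*}
the term $(I)$ is estimated as in the planar case by approximating $D_1,D_n$, up to rigid motions, by a common slab over $F_1$ (which works in any dimension because $|D_1|,|D_n|=O(d_1)$ and Lemma~\ref{lem:potential1} is dimension-free), yielding $(I)=O(d_1^{3-\alpha/d})$. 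For $(II)$, I exploit the isometry $S_{1n}$ provided by Definition~\ref{def:class_gen}, composing it with the translation $T$ by $-|d_n|\nu_n$ to obtain $D_1^R\defeq T\circ S_{1n}(D_1)$ satisfying $|\asymm{D_1^R}{D_n}|=O(d_1^2)$. By invariance of $\nli$ under isometries, $\nli(B,D_1)=\nli(B^R,D_1^R)$, whence
\begin{equation*}
\bigl|\nli(B,D_1)-\nli(B,D_1^R)\bigr|\leq \nli(D_1^R,\asymm{B^R}{B}),
\end{equation*}
and $\asymm{B^R}{B}$ decomposes into at most $(n-1)$ slabs of thickness $O(d_1)$ around the remaining faces of $P$.

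The main obstacle — and the only genuinely new geometric input — is to prove that $\nli(D_1^R,U_i)=O(d_1^2)$ for each slab $U_i$ neighbouring a face $F_i$ adjacent to $F_n$, since for non-adjacent faces one has positive separation and the bound is trivial. In the planar case this was handled by a direct computation in the single $(x_1,x_2)$-plane using the angular opening $\theta$ between $R_1$ and $R_2$. In $d$ dimensions, two adjacent faces meet along a $(d-2)$-dimensional ridge $E$; introducing coordinates so that $E$ lies in a coordinate subspace and the two faces span opposite half-planes in the orthogonal $2$-plane with dihedral angle $\theta_i\in(0,\pi)$, each slab $D_1^R$, $U_i$ splits as a product of (a subset of) $E$ with a $2$-dimensional wedge of the same shape as $R_1$, $R_2$ in Figure~\ref{fig:rectangles}. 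Using $|x-y|^2\geq|x'-y'|^2$ where $x',y'$ denote the projections onto the orthogonal $2$-plane, and applying the planar bound $|x'-y'|\geq\sqrt{(1-\cos\theta_i)/2}\,(s_1+s_2)$ to these projections, I can reduce the $d$-dimensional interaction integral to a product of a bounded integral over $E\times E$ and the planar wedge-interaction already shown to be $O(d_1^2)$. Granted this geometric reduction, the remainder of Lemma~\ref{lem:inductionk} transfers with only cosmetic changes, replacing the condition $\alpha<2$ by $\alpha<d$ and $\diam(S(B))=O(1)$ by the analogous uniform bound on the polytope, and Theorem~\ref{thm:main3} then follows from the same chain of inequalities \eqref{proofthm1-1}--\eqref{proofthm1-3} as in the proof of Theorem~\ref{thm:main1}, using \cite[Theorem~1.1]{FigZha} in place of \cite[Theorem~7]{FigalliMaggiARMA}.
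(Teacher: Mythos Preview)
Your overall architecture matches the paper's proof: reduce to single-face moves via the iteration of Lemma~\ref{lem:inductionk}, split $|\nl(\widetilde P)-\nl(P)|$ into $(I)+(II)$, handle $(I)$ via Lemma~\ref{lem:potential1}, and for $(II)$ transport $D_1$ onto $D_n$ by the isometry of Definition~\ref{def:class_gen} so that only the slab--slab interaction $\nli(D_1^R,U_i)$ remains to be estimated.

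The gap is in your estimate of that slab--slab interaction for adjacent faces. You project onto the $2$-plane orthogonal to the $(d-2)$-dimensional ridge via $|x-y|\geq|x'-y'|$ and then invoke the planar wedge computation from Lemma~\ref{lem:induction0}. But discarding the ridge component of the distance and factoring out the $E\times E$ integral leaves you with the \emph{two-dimensional} integral $\iint |x'-y'|^{-\alpha}\,dx'\,dy'$ over two rectangles of thickness $O(d_1)$ meeting at a vertex. That is exactly the integral computed at the end of the proof of Lemma~\ref{lem:induction0}, and it is finite only for $\alpha<2$: for $\alpha\geq 2$ one has $\int_0^\ell\int_0^\ell(s_1+s_2)^{-\alpha}\,ds_1\,ds_2=+\infty$. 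Hence your argument, as written, covers only the sub-range $\alpha\in(0,2)$ rather than the full range $\alpha\in(0,d)$ required in dimension $d\geq 3$.

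The paper fixes this by projecting in the complementary direction: instead of projecting onto the $2$-plane transverse to the ridge, it projects each slab point onto the $(d-1)$-dimensional face it sits over, using $|x-y|\geq C\,|\pi_1(x)-\pi_i(y)|$ with a constant depending only on the angle between $\nu_1$ and $\nu_i$. Fubini then extracts the factor $d_1^2$ from the two thickness directions and leaves the face--face integral $\int_{F_1}\int_{F_i}|x'-y'|^{-\alpha}\,d\Hn\,d\Hn$, which is finite for every $\alpha<d$ (near the common ridge the two $(d-1)$-faces together span a full $d$-dimensional neighbourhood, so the singularity is that of $|z|^{-\alpha}$ in $\R^d$). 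Once you replace your ridge-projection step by this face-projection step, the rest of your outline goes through unchanged.
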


\begin{proof}
Since the argument is similar to the one used to prove the estimate in the two dimensional case, here we limit ourselves to sketch the main changes in the proof.

\medskip\noindent
\textit{Step 1.} We claim that there exist $\varepsilon_1>0$  and $c_1>0$ such that for every $|d_1|<\varepsilon_1$ it holds
\begin{equation}\label{eq:gen_est_one}
\big| V(d_1,0,\dots,0) - V(0,\dots,0) \big| \leq c_1 |d_1|^2.
\end{equation}
Indeed, let us consider $d_1>0$ sufficiently small and set
\[
\widetilde{P}\defeq P(d_1,0,\dots,0), \quad\quad
B\defeq P\cap \widetilde{P},\quad\quad
D_1\defeq\widetilde{P}\setminus B,\quad\quad
D_n\defeq P\setminus\widetilde{P}.
\]
The volume constraint $|P|=|\widetilde{P}|$ together with the fact that $\mathcal{H}^{d-1}(F_1) = \mathcal{H}^{d-1}(F_n)$ yield
\begin{equation}\label{eq:d1dn_higher}
d_n = -d_1 + O(d_1^2).
\end{equation}
We have that
\begin{equation}\label{eq:first_estiamte_gen}
\big| \mathcal{V}(\widetilde{P}) - \mathcal{V}(P)  \big| \leq \big| \nl(D_1)-\nl(D_n)\big| + 2\big| \nli(B,D_1)-\nli(B,D_n)\big| .
\end{equation}

We start by considering the first term on the right-hand side.
Let $\nu_i\in\mathbb{S}^{d-1}$ be the normal to the face $F_i$ pointing outside $P$, and set
\begin{align*}
R_1 &\defeq \bigl\{x\in \R^d \,:\, x = y+ t \nu_1,\, y\in  F_1,\, t\in  [0,d_1] \bigr\}, \\
R_n &\defeq \bigl\{x\in \R^d \,:\, x = y+ t \nu_n,\, y\in  F_n,\, t\in  [-d_1,0] \bigr\}.
\end{align*}
Notice that, for $i=1, n$ it holds
\begin{equation}\label{eq:estR1Rn_gen}
|\asymm{R_i}{D_i}| = O(d_1^2).
\end{equation}
Using Lemma~\ref{lem:potential1} together with \eqref{eq:d1dn_higher} and \eqref{eq:estR1Rn_gen}, and arguing as in \eqref{proof-induction-4} we get
\begin{align}\label{eq:estD1Dn_gen}
\big| \nl(D_1) - \nl(D_n)\big|  = O(d_1^{3-\frac{\alpha}{2}}).
\end{align}

We next estimate the second term on the right-hand side of \eqref{eq:first_estiamte_gen}. We notice that
\begin{equation}\label{eq:simplification_gen}
\begin{split}
\big| \nli(B,D_1)  - \nli(B,D_n)\big|  &\leq \big| \nli(B,D_1)-\nli(B,R_1)\big|  + \big| \nli(B,D_n)-\nli(B,R_n)\big| \\
&\hspace{2cm}	+ \big| \nli(B,R_1)-\nli(B,R_n)\big|  \\
& = \big| \nli(B,R_1) - \nli(B,R_n)\big| + O(d_1^2).
\end{split}
\end{equation}
We now focus on the first term on the right-hand side of \eqref{eq:simplification_gen}. Let $S$ be the isometry such that $S(F_n)=F_1$ and $S(P)=P$, given by Definition~\ref{def:class_gen}. Consider the composition of $S$ with a translation $T$ by $d_1\nu_1$: the resulting transformation maps $R_n$ onto $R_1$, i.e. $T\circ S(R_n)=R_1$. We also set $\widetilde{B}\defeq T\circ S(B)$. In view of the invariance of $P$ with respect to $S$, it is then possible to estimate
\begin{equation}\label{eq:estBR1_gen}
\begin{split}
\big| \nli(B,R_1) - \nli(B,R_n)\big|
& = \big| \nli(B,R_1)-\nli(\widetilde{B},R_1)\big| \\
& \leq \nli( R_1, \asymm{\widetilde{B}}{B} ) \\
& \leq (n-1)\nli( R_1, U_i ) + O(d_1^2)
\end{split}
\end{equation}
where
\[
U_i := \bigl\{x\in \R^d \,:\,  x = y+ t \nu_i,\,  y\in  F_i,\, t\in  [-2d_1,2d_1] \bigr\}\setminus R_1,
\]
and $i\in{1,\dots,n-1}$ is such that the interaction of $R_1$ with $U_i$ is maximal.
By elementary geometry one can see that there exists $C>0$ depending only on the angle between $\nu_1$ and $\nu_i$ such that for every $x\in R_1$ and $y\in U_i$
\[
| x-y | \geq C |\pi_1(x)-\pi_i(y)|,
\]
where $\pi_1$ and $\pi_i$ are the projections on $F_1$ and $F_i$ respectively. Therefore using Fubini's Theorem
\begin{equation*}
\begin{split}
\nli(R_1,U_i)& = \int_{R_1} \int_{U_i} \frac{1}{| x-y |^\alpha} \dd y \dd x \\
& \leq C \int_{R_1} \int_{U_i} \frac{1}{| \pi_1(x) - \pi_i(y) |^\alpha} \dd y \dd x \\
& = C d_1^2 \int_{F_1} \int_{F_i}  \frac{1}{|x'-y'|^\alpha} \dd \mathcal{H}^{d-1}(y') \dd \mathcal{H}^{d-1}(x').
\end{split}
\end{equation*}
As $\alpha<d$, the last integral is a finite constant depending only on the polygon, hence
\begin{equation}\label{eq:est_angle_gen}
\nli(R_1,U_i) = O(d_1^2).
\end{equation}
Combining \eqref{eq:first_estiamte_gen}, \eqref{eq:estR1Rn_gen}, \eqref{eq:estD1Dn_gen}, \eqref{eq:simplification_gen}, \eqref{eq:estBR1_gen},  and \eqref{eq:est_angle_gen} we get \eqref{eq:gen_est_one}.

\medskip\noindent
\textit{Step 2.} We conclude the proof of the lemma by using the iteration argument of Lemma~\ref{lem:inductionk}. Minor changes in the proof in order to adapt it to the general dimensional case are left to the reader.
\end{proof}


\subsection*{Acknowledgments}
The authors would like to thank Gian Paolo Leonardi and Massimiliano Morini for interesting discussions on the topic of this paper. Part of this work was carried out during IT's visit to the University of Trento.  He gratefully acknowledges the hospitality of the Department of Mathematics.
MB is member of the GNAMPA group of the Istituto Nazionale di Alta Matematica (INdAM).
The research of RC has been supported by grant EP/R013527/2 ``Designer Microstructure via Optimal Transport Theory'' of David Bourne.

\bibliographystyle{IEEEtranS}
\def\url#1{}
\bibliography{references}

\begin{thebibliography}{10}
\providecommand{\url}[1]{#1}
\csname url@samestyle\endcsname
\providecommand{\newblock}{\relax}
\providecommand{\bibinfo}[2]{#2}
\providecommand{\BIBentrySTDinterwordspacing}{\spaceskip=0pt\relax}
\providecommand{\BIBentryALTinterwordstretchfactor}{4}
\providecommand{\BIBentryALTinterwordspacing}{\spaceskip=\fontdimen2\font plus
\BIBentryALTinterwordstretchfactor\fontdimen3\font minus
  \fontdimen4\font\relax}
\providecommand{\BIBforeignlanguage}[2]{{%
\expandafter\ifx\csname l@#1\endcsname\relax
\typeout{** WARNING: IEEEtranS.bst: No hyphenation pattern has been}%
\typeout{** loaded for the language `#1'. Using the pattern for}%
\typeout{** the default language instead.}%
\else
\language=\csname l@#1\endcsname
\fi
#2}}
\providecommand{\BIBdecl}{\relax}
\BIBdecl

\bibitem{AlBrChTo2017_3}
\BIBentryALTinterwordspacing
S.~Alama, L.~Bronsard, R.~Choksi, and I.~Topaloglu, ``Droplet breakup in the
  liquid drop model with background potential,'' \emph{Commun. Contemp. Math.},
  vol.~21, no.~3, pp. 1\,850\,022, 23, 2019.
  \url{https://doi.org/10.1142/S0219199718500220}
\BIBentrySTDinterwordspacing

\bibitem{BoCr14}
M.~Bonacini and R.~Cristoferi, ``Local and global minimality results for a
  nonlocal isoperimetric problem on {$\mathbb{R}\sp N$},'' \emph{SIAM J. Math.
  Anal.}, vol.~46, no.~4, pp. 2310--2349, 2014.

\bibitem{Bo2013}
\BIBentryALTinterwordspacing
M.~Bonacini, ``Epitaxially strained elastic films: the case of anisotropic
  surface energies,'' \emph{ESAIM Control Optim. Calc. Var.}, vol.~19, no.~1,
  pp. 167--189, 2013.  \url{https://doi.org/10.1051/cocv/2012003}
\BIBentrySTDinterwordspacing

\bibitem{ChNeuTo20}
R.~Choksi, R.~Neumayer, and I.~Topaloglu, ``Anisotropic liquid drop models,''
  \emph{Adv. Calc. Var.}, to appear.

\bibitem{ChPe2010}
\BIBentryALTinterwordspacing
R.~Choksi and M.~Peletier, ``Small volume fraction limit of the diblock
  copolymer problem: {I}. {S}harp-interface functional,'' \emph{SIAM J. Math.
  Anal.}, vol.~42, no.~3, pp. 1334--1370, 2010.
  \url{http://dx.doi.org/10.1137/090764888}
\BIBentrySTDinterwordspacing

\bibitem{ChMuTo2017}
R.~Choksi, C.~B. Muratov, and I.~Topaloglu, ``An old problem resurfaces
  nonlocally: Gamow's liquid drops inspire today's research and applications,''
  \emph{Notices Amer. Math. Soc.}, vol.~64, no.~11, pp. 1275--1283, 2017.

\bibitem{CicaleseLeonardi}
M.~Cicalese and G.~P. Leonardi, ``A selection principle for the sharp
  quantitative isoperimetric inequality,'' \emph{Arch. Rat. Mech. Anal.}, vol.
  206, no.~2, pp. 617--643, 2012.

\bibitem{FigalliMaggiARMA}
A.~Figalli and F.~Maggi, ``On the shape of liquid drops and crystals in the
  small mass regime,'' \emph{Arch. Rat. Mech. Anal.}, vol. 201, pp. 143--207,
  2011.

\bibitem{FigalliMaggiPratelliINVENTIONES}
A.~Figalli, F.~Maggi, and A.~Pratelli, ``A mass transportation approach to
  quantitative isoperimetric inequalities,'' \emph{Invent. Math.}, vol. 182,
  no.~1, pp. 167--211, 2010.

\bibitem{FigZha}
A.~Figalli and Y.~Zhang, ``Strong stability for the {W}ulff inequality with a
  crystalline norm,'' \emph{Comm. Pure Appl. Math.}, to appear.

\bibitem{FFMMM}
A.~Figalli, N.~Fusco, F.~Maggi, V.~Millot, and M.~Morini, ``Isoperimetry and
  stability properties of balls with respect to nonlocal energies,''
  \emph{Comm. Math. Phys.}, vol. 336, no.~1, pp. 441--507, 2015.

\bibitem{FrLi2015}
R.~L. Frank and E.~H. Lieb, ``A compactness lemma and its application to the
  existence of minimizers for the liquid drop model,'' \emph{SIAM J. Math.
  Anal.}, vol.~47, no.~6, pp. 4436--4450, 2015.

\bibitem{fuscomaggipratelli}
N.~Fusco, F.~Maggi, and A.~Pratelli, ``The sharp quantitative isoperimetric
  inequality,'' \emph{Ann. Math.}, vol. 168, pp. 941--980, 2008.

\bibitem{FuPr}
N.~Fusco and A.~Pratelli, ``Sharp stability for the {R}iesz potential,''
  \emph{ESAIM Control Optim. Calc. Var.}, to appear.

\bibitem{Ga1930}
\BIBentryALTinterwordspacing
G.~Gamow, ``Mass defect curve and nuclear constitution,'' \emph{Proc. R. Soc.
  Lond. A}, vol. 126, no. 803, pp. 632--644, 1930.
  \url{http://rspa.royalsocietypublishing.org/content/126/803/632}
\BIBentrySTDinterwordspacing

\bibitem{Ju2014}
\BIBentryALTinterwordspacing
V.~Julin, ``Isoperimetric problem with a {C}oulomb repulsive term,''
  \emph{Indiana Univ. Math. J.}, vol.~63, no.~1, pp. 77--89, 2014.
  \url{http://dx.doi.org/10.1512/iumj.2014.63.5185}
\BIBentrySTDinterwordspacing

\bibitem{KnMu2014}
\BIBentryALTinterwordspacing
H.~Kn{\"u}pfer and C.~B. Muratov, ``On an isoperimetric problem with a
  competing nonlocal term {II}: {T}he general case,'' \emph{Comm. Pure Appl.
  Math.}, vol.~67, no.~12, pp. 1974--1994, 2014.
  \url{http://dx.doi.org/10.1002/cpa.21479}
\BIBentrySTDinterwordspacing

\bibitem{KnMuNo2016}
\BIBentryALTinterwordspacing
H.~Kn{\"u}pfer, C.~B. Muratov, and M.~Novaga, ``Low density phases in a
  uniformly charged liquid,'' \emph{Comm. Math. Phys.}, vol. 345, no.~1, pp.
  141--183, 2016.  \url{http://dx.doi.org/10.1007/s00220-016-2654-3}
\BIBentrySTDinterwordspacing

\bibitem{LiLo}
E.~Lieb and M.~Loss, \emph{Analysis}, 2nd~ed., ser. Graduate Studies in
  Mathematics.\hskip 1em plus 0.5em minus 0.4em\relax Providence, RI: American
  Mathematical Society, 2001, vol.~14.

\bibitem{LuOtto2014}
\BIBentryALTinterwordspacing
J.~Lu and F.~Otto, ``Nonexistence of a minimizer for
  {T}homas-{F}ermi-{D}irac-von {W}eizs\"acker model,'' \emph{Comm. Pure Appl.
  Math.}, vol.~67, no.~10, pp. 1605--1617, 2014.
  \url{http://dx.doi.org/10.1002/cpa.21477}
\BIBentrySTDinterwordspacing

\bibitem{Maggi2012}
F.~Maggi, \emph{Sets of Finite Perimeter and Geometric Variational Problems},
  1st~ed., ser. Cambridge Studies in Advanced Mathematics.\hskip 1em plus 0.5em
  minus 0.4em\relax Cambridge: Cambridge University Press, 2012, vol. 135.

\bibitem{MiTo}
O.~Misiats and I.~Topaloglu, ``On minimizers of an anisotropic liquid drop
  model,'' \emph{ESAIM Control Optim. Calc. Var.}, to appear.

\bibitem{MurZal}
C.~B. Muratov and A.~Zaleski, ``On an isoperimetric problem with a competing
  non-local term: quantitative results,'' \emph{Ann. Global Anal. Geom.},
  vol.~47, pp. 63--80, 2014.

\bibitem{Neum16}
\BIBentryALTinterwordspacing
R.~Neumayer, ``A strong form of the quantitative {W}ulff inequality,''
  \emph{SIAM J. Math. Anal.}, vol.~48, no.~3, pp. 1727--1772, 2016.
  \url{https://doi.org/10.1137/15M1013675}
\BIBentrySTDinterwordspacing

\bibitem{RW2014}
\BIBentryALTinterwordspacing
X.~Ren and J.~Wei, ``Double tori solution to an equation of mean curvature and
  {N}ewtonian potential,'' \emph{Calc. Var. Partial Differential Equations},
  vol.~49, no. 3-4, pp. 987--1018, 2014.
  \url{http://dx.doi.org/10.1007/s00526-013-0608-6}
\BIBentrySTDinterwordspacing

\end{thebibliography}

\end{document}